\newtheorem{theorem}{\textit{Theorem}}
\newtheorem{lemma}{\textit{Lemma}}
\newtheorem{proposition}{\textit{Proposition}}
\newmdtheoremenv{theo}{Theorem}
\title{\LARGE Nonsmooth rank-one matrix factorization landscape}
\begin{document}

\author{\large C\'edric Josz\thanks{\url{cj2638@columbia.edu}, IEOR, Columbia University, New York. Research supported by DTRA grant 13-1-0021, DARPA grant Lagrange, NSF grant 2023032, and ONR grant N00014-21-1-2282.} \and Lexiao Lai\thanks{\url{ll3352@columbia.edu}, IEOR, Columbia University, New York.}}
\date{}

\maketitle

\begin{center}
    \textbf{Abstract}
    \end{center}
    \vspace*{-3mm}
 \begin{adjustwidth}{0.2in}{0.2in}
~~~~We provide the first positive result on the nonsmooth optimization landscape of robust principal component analysis, to the best of our knowledge. It is the object of several conjectures and remains mostly uncharted territory. We identify a necessary and sufficient condition for the absence of spurious local minima in the rank-one case. Our proof exploits the subdifferential regularity of the objective function in order to eliminate the existence quantifier from the first-order optimality condition known as Fermat's rule.
\end{adjustwidth} 
\vspace*{3mm}
\noindent{\bf Key words:} Clarke subdifferential, set-valued analysis, subdifferential regularity

\section{Introduction}
\label{intro}
Low-rank matrix factorization has received significant attention in the last decade, initiated by several seminal papers \cite{gross2011,candes2011,recht2011,candes2010power}. 
It has various applications in data science and machine learning, which include principal component analysis \cite{candes2011,bertsimas2020solving}, facial recognition \cite{candes2011}, video surveillance \cite{bouwmans2017decomposition,garcia2020background}, recommender systems \cite{koren2009} and natural language processing \cite{levy2014}. The number of survey papers \cite{nguyen2019,charisopoulos2021low,chi2019,chen2018harnessing,jain2017,moitra2018algorithmic} on the subject in the last three years is a testament to the amount of research it has spawned.

While the exact recovery of a low-rank matrix via convex optimization is well understood \cite{candes2011,chandrasekaran2011}, its non-convex counterpart 
\begin{equation}
\label{eq:non-convex}
    \inf_{(X,Y)\in \mathbb{R}^{m\times r} \times \mathbb{R}^{n \times r}} ~~~ \| XY^T - M \|_1 
\end{equation}
remains elusive, where $M \in \mathbb{R}^{m\times n}$ and $\|A\|_1 := \sum_{i=1}^m \sum_{j=1}^n |A_{ij}|$ for any $A\in \mathbb{R}^{m \times n}$. Note that minimizing the Frobenius norm squared instead yields approximate recovery and is better understood \cite{baldi1989neural,valavi2020landscape,du2018algorithmic}. For a general data matrix $M \in \mathbb{R}^{m \times n}$, finding a global minimum to \eqref{eq:non-convex} is known to be an NP-hard problem \cite[Theorem 3]{gillis2018complexity}.
However if $M$ is itself a low-rank matrix, then it is conjectured that the objective function is sharp \cite{burke1993weak} in a neighborhood of the global minima (see \cite[Equation (2.11)]{li2019incremental} and \cite[Conjecture 8.7]{charisopoulos2021low}). At best, this would imply convergence guarantees for local search algorithms when initialized in a neighborhood of the global minima. In order to prove convergence to a global minimum from any random initial point, as observed in \cite{joszneurips2018,li2019,fattahi2020,anderson2019}, it is necessary to analyze the landscape. We do so in the rank-one case and obtain the following theorem.
\begin{theorem}
\label{thm:landscape}
Assume that $\mathrm{rank}(M)\leqslant1$. Then the function defined from $\mathbb{R}^m \times \mathbb{R}^n$ to $\mathbb{R}$ by
\begin{equation}
\label{eq:f1}
    f(x,y) :=  \sum\limits_{i=1}^m \sum\limits_{j=1}^n |x_iy_j-M_{ij}| 
\end{equation}
has no spurious local minima if and only if none or all of the entries of $M$ are equal to zero.
\end{theorem}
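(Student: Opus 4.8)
The plan is to treat the two implications separately, after normalizing via the sign-flip change of variables $(x,y)\mapsto(\epsilon\odot x,\delta\odot y)$ with $\epsilon\in\{\pm1\}^m,\delta\in\{\pm1\}^n$: this is a diffeomorphism carrying $f(\,\cdot\,;M)$ to $f(\,\cdot\,;(\epsilon\delta^\top)\odot M)$, so it preserves the landscape and I may assume $M\geqslant 0$ entrywise, and in particular $M=ab^\top$ with $a,b>0$ when no entry of $M$ vanishes. For the ``only if'' implication, suppose $M$ has a zero entry but $M\neq 0$; since $\operatorname{rank}(M)\leqslant 1$, the zero entry forces a zero row (say row $i_1$) or a zero column. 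I would then check that $(e_{i_1},0)$ is a local minimum with value $\|M\|_1>0$, hence spurious: for $(u,v)$ small, row $i_1$ of $(e_{i_1}+u)v^\top$ contributes $|1+u_{i_1}|\,\|v\|_1$ to $f$, every other row $i$ contributes at least $\sum_j|M_{ij}|-|u_i|\,\|v\|_1$ by the triangle inequality, and $|1+u_{i_1}|-\sum_{i\neq i_1}|u_i|>0$ once $\|u\|_\infty$ is small, so $f(e_{i_1}+u,v)\geqslant\|M\|_1$; the zero-column case is symmetric by transposition.

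For the ``if'' implication, if $M=0$ then $f(x,y)=\|x\|_1\|y\|_1$ and any point with $f>0$ descends along $t\mapsto(tx,y)$ as $t\downarrow 0$, so no spurious local minimum exists. In the remaining case $M=ab^\top$, $a,b>0$, let $(x,y)$ be a local minimum; the goal is $f(x,y)=0$. Subdifferential regularity enters here: each summand $|x_iy_j-M_{ij}|$ is regular, being the composition of a $C^1$ bilinear map with $|\cdot|$, and regularity is inherited by sums, which allows Fermat's rule $0\in\partial f(x,y)$ to be recast without an existence quantifier as $f'(x,y;d)\geqslant 0$ for all $d$; equivalently, $(x,y)$ globally minimizes $f$ over the $x$-block with $y$ fixed and over the $y$-block with $x$ fixed. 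Because the $x$-block problem splits as $\sum_i\!\big(\sum_j|\xi_iy_j-a_ib_j|\big)$, each $x_i/a_i$ must lie in the common weighted-median set $T:=\operatorname{wmed}\{\,b_j/y_j:y_j\neq 0\,\}$ with weights $|y_j|$, and symmetrically each $y_j/b_j$ lies in a common set $\Sigma$. A direct estimate at the nearby point $(\varepsilon\mathbf 1,\,y+\nu\mathbf 1)$, $\varepsilon,\nu>0$ small, rules out $x=0$, and symmetrically $y=0$.

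The endgame is a case analysis on $T$ and $\Sigma$. If $T=\{t\}$ is a singleton then $x=ta$ with $t\neq 0$, so $f(x,y)=\|a\|_1\sum_j|ty_j-b_j|$, and block-optimality in $y$ forces $y=b/t$, hence $f=0$; likewise if $\Sigma$ is a singleton. Otherwise $T=[\tau_1,\tau_2]$ and $\Sigma=[\sigma_1,\sigma_2]$ are genuine intervals, each arising from a tie in its weighted median. When $0\notin[\tau_1,\tau_2]$ — say $0<\tau_1<\tau_2$ after the symmetry $(x,y)\mapsto(-x,-y)$ — the inclusions $x_i/a_i\in T$, $y_j/b_j\in\Sigma$ and the reciprocal identity $b_j/y_j=(y_j/b_j)^{-1}$ force $\Sigma=[1/\tau_2,1/\tau_1]$ and collapse the configuration to a rigid two-block pattern with $x_i/a_i\in\{\tau_1,\tau_2\}$ and $y_j/b_j\in\{1/\tau_1,1/\tau_2\}$, each value carrying exactly half of the relevant $\ell^1$-mass; on this pattern the explicit line that inflates the $\tau_1$-block of $x$ by $1+s$ while deflating the $\tau_2$-block by $1-s$ (with the analogous inflation--deflation on $y$) has negative derivative at $s=0$, contradicting minimality. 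The case $0\in(\tau_1,\tau_2)$ — equivalently $0\in(\sigma_1,\sigma_2)$ — is, I expect, the main obstacle: the median ties now only force the positive and negative parts of $x$, and of $y$, to carry equal $\ell^1$-mass, the configuration is not rigid, and — as the $2\times 2$ all-ones example $x=y=(1,-1)$ shows — $f'(x,y;\cdot)\geqslant 0$ can hold at a non-minimizer. I would close this case with a second-order descent: a direction $d$ with $f'(x,y;d)=0$ along which the curvature of the bilinear terms makes $s\mapsto f((x,y)+sd)$ strictly concave at $0$, built by pulling the coordinates sitting at the interval endpoints toward zero in a balanced, rank-one-preserving way, so that the products over the active entries move only to second order while the inactive ones improve to first order; verifying that such a $d$ always exists under the mass-balance and reciprocity constraints is the crux.
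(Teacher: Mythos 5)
Your outline runs parallel to the paper's proof for most of its length: the sign normalization, the treatment of the zero-entry and $M=0$ cases, and the use of subdifferential regularity to pass to blockwise (partial) optimality are all sound, and your ``common weighted-median set'' $T$ for the ratios $x_i/a_i$ is exactly the root set of the non-decreasing step function $\partial\alpha(t)=\sum_j\mathrm{sign}(y_jt-b_j)y_j$ that the paper analyzes in Lemma \ref{lemma:step}. Your singleton case and your first-order descent in the case $0\notin T$ can indeed be made rigorous (the rigidity of the two-endpoint pattern follows from the fact that the jump points $b_j/y_j$ of $\partial\alpha$ must avoid the interior of $T$ while hitting both endpoints, which pins $\Sigma=[1/\tau_2,1/\tau_1]$ and forces all ratios onto the endpoints); this plays the role of the paper's $\gamma(\Lambda)<0$ contradiction, recast as an explicit descent direction.

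The genuine gap is the case you yourself flag as the crux, $0\in(\tau_1,\tau_2)$ and $0\in(\sigma_1,\sigma_2)$, and it is precisely where the paper's main technical work lies. Two things are missing. First, you never establish the ratio inequalities $x_iy_j/(a_ib_j)\leqslant 1$: membership of $x_i/a_i$ and $y_j/b_j$ in the two median intervals does not bound their product by $1$, and the paper derives these inequalities from an interlacing argument (a non-decreasing step function cannot have a jump point strictly between two of its roots, so every jump $b_j/y_j$ lies on the far side of all the roots $x_i/a_i$ from $0$). Without them there is no control on the signs of the residuals $x_iy_j-a_ib_j$, and no descent computation can proceed by simply stripping absolute values. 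Second, your proposed second-order descent direction --- ``pulling the endpoint coordinates toward zero in a balanced, rank-one-preserving way'' --- is not constructed or verified, and the $2\times 2$ example you cite shows that not every zero-derivative direction works. The paper's resolution is concrete: move toward a global minimum along $(h,k)=t\bigl((u\theta,v/\theta)-(x,y)\bigr)$, where $\theta$ is the common value of the positive binding ratios $x_i/u_i$ (shown to be well defined by playing the binding inequalities against each other), or any $\theta\neq 0$ if no inequality is binding; the ratio inequalities are preserved along this segment and one computes $f(x+h,y+k)=(1-t^2)f(x,y)$ exactly. Until you supply both the ratio inequalities and an explicit verified direction of this kind, the ``if'' direction of the theorem is not proved.
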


Spurious local minima are defined as local minima that are not global minima. The analysis of the landscape of the nonconvex and nonsmooth objective function $f$ poses a significant challenge. We next explain why several standard tools in optimization fall short of providing a way forward. As a first approach, one could consider the following equivalent constrained optimization problem
\begin{subequations}
    \begin{gather}
        \min_{(x,y,z)\in \mathbb{R}^m \times \mathbb{R}^n \times \mathbb{R}^{m \times n}} ~~~~~~~ \sum_{i=1}^m \sum_{j=1}^n z_{ij} ~~~~~~~~~~~~~~~~~~~~~~~~~~~~~~~~~~~~~~~~~~~~~~~~~ \\ ~~ \text{subject to} ~~~ -z_{ij} \leqslant x_iy_j-M_{ij} \leqslant z_{ij}, ~ i=1,\hdots,m, ~ j = 1,\hdots,n.
    \end{gather}
\end{subequations}
The advantage of such a formulation is, of course, the fact that it is smooth. One may then invoke the Karush-Kuhn-Tucker conditions. However, it is not clear how one should establish constraint qualification. Observe that when $M = (0,1)^T$, the gradients of the active constraints are linearly dependent at the origin, which is a saddle point. Besides, the Karush-Kuhn-Tucker conditions comprise an exponential numbers of cases as a function of the dimensions $m$ and $n$, and there is seemingly no way to avoid treating each case separately. 

As a second approach, one could try to approximate the nonconvex and nonsmooth objective function \eqref{eq:f1} by a smooth function. This viewpoint underlies many approaches for solving nonsmooth optimization problems, notably proximal methods \cite{beck2017first} and Nesterov's smoothing technique \cite{nesterov2005smooth}. Along these lines, the first author and co-authors recently established that the uniform limit of a sequence of functions which are devoid of spurious local minima is itself devoid of spurious strict local minima \cite[Proposition 2.7]{joszneurips2018}. A spurious strict local minimum is a strict local minimum that is not a global minimum.
In our setting, a natural candidate of smooth approximations is given by the $\ell_p$-norm, where $p>1$:
\begin{equation}
\label{eq:fp}
    f_p(x,y) :=  \sum\limits_{i=1}^m \sum\limits_{j=1}^n |x_iy_j-M_{ij}|^p.
\end{equation}
By letting $p$ converge to 1 from above, $f_p$ converges uniformly to $f$ on any compact set. However, this approach fails to deliver any meaningful results for the problem at hand. Indeed, we know full well that the objective function \eqref{eq:f1} is devoid of spurious strict local minima. This follows from the invariance $f(\theta x , \theta^{-1}y) = f(x,y)$ for all $\theta \in \mathbb{R}\setminus \{0\}$, whereby any point different from the origin, itself a saddle point, has neighbors with identical function values.

A third approach, and the one that we will follow in this paper, is to use a generalized Fermat rule \cite[2.3.2 Proposition]{clarke1990} which holds for locally Lipschitz functions. It states that if $(x,y) \in \mathbb{R}^m \times \mathbb{R}^n$ is a local minimum of $f$, then $0 \in \partial f(x,y)$ where $\partial f$ is the Clarke subdifferential
\cite[pp. 25-27]{clarke1990}. We will refer to a point satisfying this set inclusion as a \textit{critical} point. The Clarke subdifferential of $f$ is defined for all $(x,y) \in \mathbb{R}^m \times \mathbb{R}^n$ by
\begin{equation}
    \partial f(x,y) := \{ (s,t) \in \mathbb{R}^m \times \mathbb{R}^n ~|~ f^\circ(x,y;h,k) \geqslant s^Th + t^Tk, ~~~ \forall (h,k)\in \mathbb{R}^m \times\mathbb{R}^n \} 
\end{equation}
where one uses the generalized directional derivative
\begin{equation}
    f^\circ(x,y;h,k) := \limsup_{\scriptsize\begin{array}{c} (\bar{x},\bar{y})\rightarrow (x,y) \\
    t \searrow 0
    \end{array}
    } \frac{f(\bar{x}+th,\bar{y}+tk)-f(\bar{x},\bar{y})}{t}.
\end{equation}
By virtue of \cite[2.3.10 Chain Rule II]{clarke1990}, we have the simpler form
\begin{equation}
\label{eq:subdif}
\partial f(x,y) = \left\{ \left. \begin{pmatrix} \Lambda y \\ \Lambda^T x \end{pmatrix} ~\right|~ \Lambda \in \mathrm{sign}(xy^T-M) \right\} 
\end{equation}
where 
\begin{equation}
\mathrm{sign}(t) :=
\left\{
\begin{array}{cl}
-1 & \text{if} ~ t < 0, \\
\big[-1,1\big] & \text{if} ~ t = 0, \\
\hphantom{-}1 & \text{if} ~ t > 0.
\end{array}
\right.
\end{equation}
Above, $[-1,1]$ stands for the interval in $\mathbb{R}$ that includes $-1$ and $1$, and the sign function applies to matrices term by term. It follows from \eqref{eq:subdif} that a point is critical if and only if
\begin{equation}
\label{eq:crit}
    \exists \Lambda \in \mathrm{sign}(xy^T-M): ~~~ \Lambda y = 0 ~~~ \text{and} ~~~ \Lambda^T x = 0.
\end{equation}
In order to study the variation of $f$ in the vicinity of its critical points, we seek to eliminate the quantifier $\exists$ from \eqref{eq:crit}. The Tarski-Seidenberg theorem \cite{tarski1951decision,seidenberg1954new} guarantees that this is possible for any $M\in \mathbb{R}^{m\times n}$, as long as $m$ and $n$ are fixed. One can thus recover the set of critical points when $M = (0,1)^T$ using commercial software, as can be seen in Figure \ref{fig:tarski}. However, no such result is guaranteed for any sizes $m$ and $n$. In order to overcome this, we analyze step functions that arise in the partial Clarke subdifferentials of $f$ in Lemma \ref{lemma:step}. (The partial Clarke subdifferential $\partial_{x_1} f(x,y)$ is, by definition \cite[p. 48]{clarke1990}, the Clarke subdifferential of $f(\cdot,x_2,\hdots,x_m,y)$ at $x_1$.)
We are thus able to eliminate the quantifier from \eqref{eq:crit} with Lemma \ref{lemma:eliminate}. The new description of the critical points is much more amenable to analysis. It yields a full characterization of the landscape of $f$ in Proposition \ref{prop:classify}, from which Theorem \ref{thm:landscape} is deduced.

\begin{figure}[ht]
    \centering
   \includegraphics[width=1\linewidth]{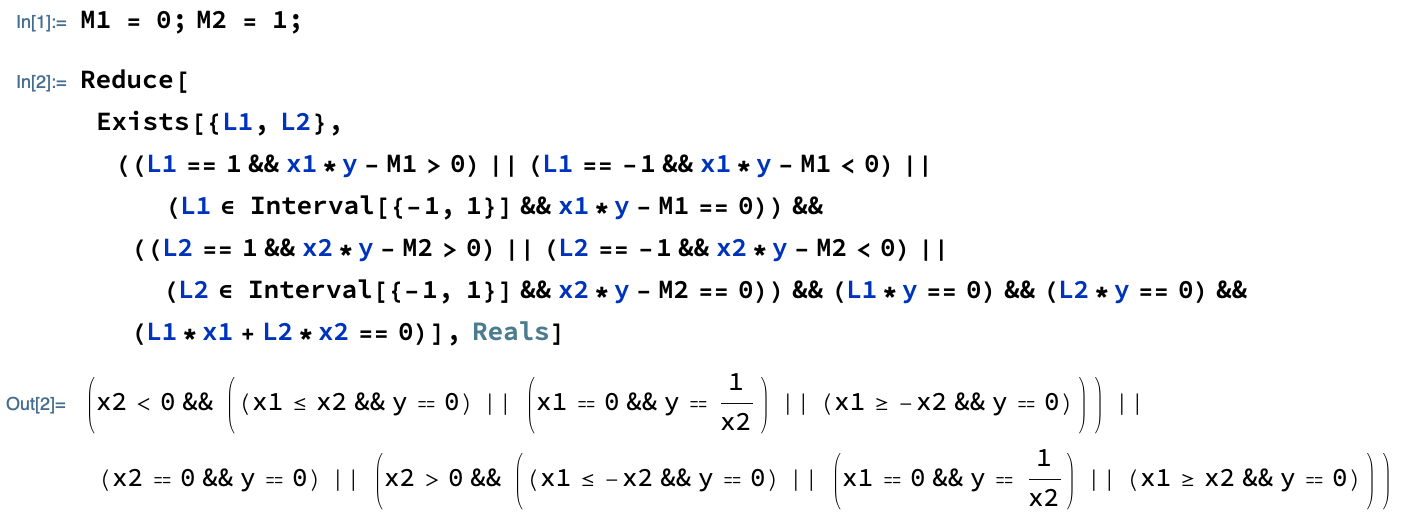}
  \caption{Elimination of quantifier with Wolfram Mathematica 12.}
  \label{fig:tarski}
\end{figure}

The reason why we can rely on partial Clarke subdifferentials to analyze the landscape is because the objective function $f$ is subdifferentially regular \cite[2.3.4 Definition]{clarke1990}. By definition, this means that its generalized directional derivative agrees with the classical directional derivative, that is to say, we have 
\begin{equation}
    \limsup_{\scriptsize\begin{array}{c} (\bar{x},\bar{y})\rightarrow (x,y) \\
    t \searrow 0
    \end{array}
    } \frac{f(\bar{x}+th,\bar{y}+tk)-f(\bar{x},\bar{y})}{t} ~ = ~ \lim_{
    t \searrow 0
    } \frac{f(x+th,y+tk)-f(x,y)}{t}
\end{equation}
for all $(x,y)\in \mathbb{R}^m \times \mathbb{R}^n$ and $(h,k) \in \mathbb{R}^m \times \mathbb{R}^n$, and the limit on the right hand side exists. As shown by Clarke \cite[2.5.2 Example]{clarke1990}, one should not take this property for granted. The function in Figure \ref{fig:irregular} defined from $\mathbb{R}^2$ to $\mathbb{R}$ by $\varphi(x_1,x_2) := \max \{ \min \{x_1,-x_2\} , x_2-x_1\}$ is not subdifferentially regular, despite being continuous and semi-algebraic, just like $f$, and hence belonging to the class of tame functions \cite{ioffe2009invitation}. As a result, its partial Clarke subdifferentials at the origin, which is a critical point, are completely decorrelated from its Clarke subdifferential: $\partial_{x_1} \varphi(0,0) \times \partial_{x_2} \varphi(0,0) \not\subset \partial \varphi(0,0) \not\subset \partial_{x_1} \varphi(0,0) \times \partial_{x_2} \varphi(0,0)$, where $\not\subset$ means ``is not a subset of'', as can be seen in Figure \ref{subdifs}. The partial Clarke subdifferentials are hence of no use to analyze the landscape of $\varphi$. In contrast, since $f$ is subdifferentially regular, we have 
\begin{equation}
\label{necessary_critical}
    \partial f(x,y) \subset \partial_{x_1}f(x,y) \times \hdots \times \partial_{x_m}f(x,y) \times \partial_{y_1}f(x,y) \times \hdots \times \partial_{y_n}f(x,y)
\end{equation}
where $\subset$ means ``is a subset of''. The inclusion is strict when
\begin{equation}
x = \begin{pmatrix}
\hphantom{-}1 \\
-1
\end{pmatrix},~~~ y = \begin{pmatrix}
1 \\ 
1
\end{pmatrix},~~~\text{and}~~~
    M = 
    \begin{pmatrix}
    \hphantom{-}2 & \hphantom{-}1 \\
    -1 & -1/2
    \end{pmatrix}
\end{equation}
since $0 \notin \partial f(x,y)$ yet $0 \in \partial_{x_1}f(x,y) \times  \partial_{x_2}f(x,y) \times \partial_{y_1}f(x,y) \times \partial_{y_2}f(x,y)$. It thus yields a necessary but not sufficient condition for being critical. The proof of Lemma \ref{lemma:step} makes extensive use of this necessary condition but ultimately invokes the critical condition to conclude.


\begin{figure}[ht!]
\centering
\begin{subfigure}{.49\textwidth}
  \centering
  \includegraphics[width=1\linewidth]{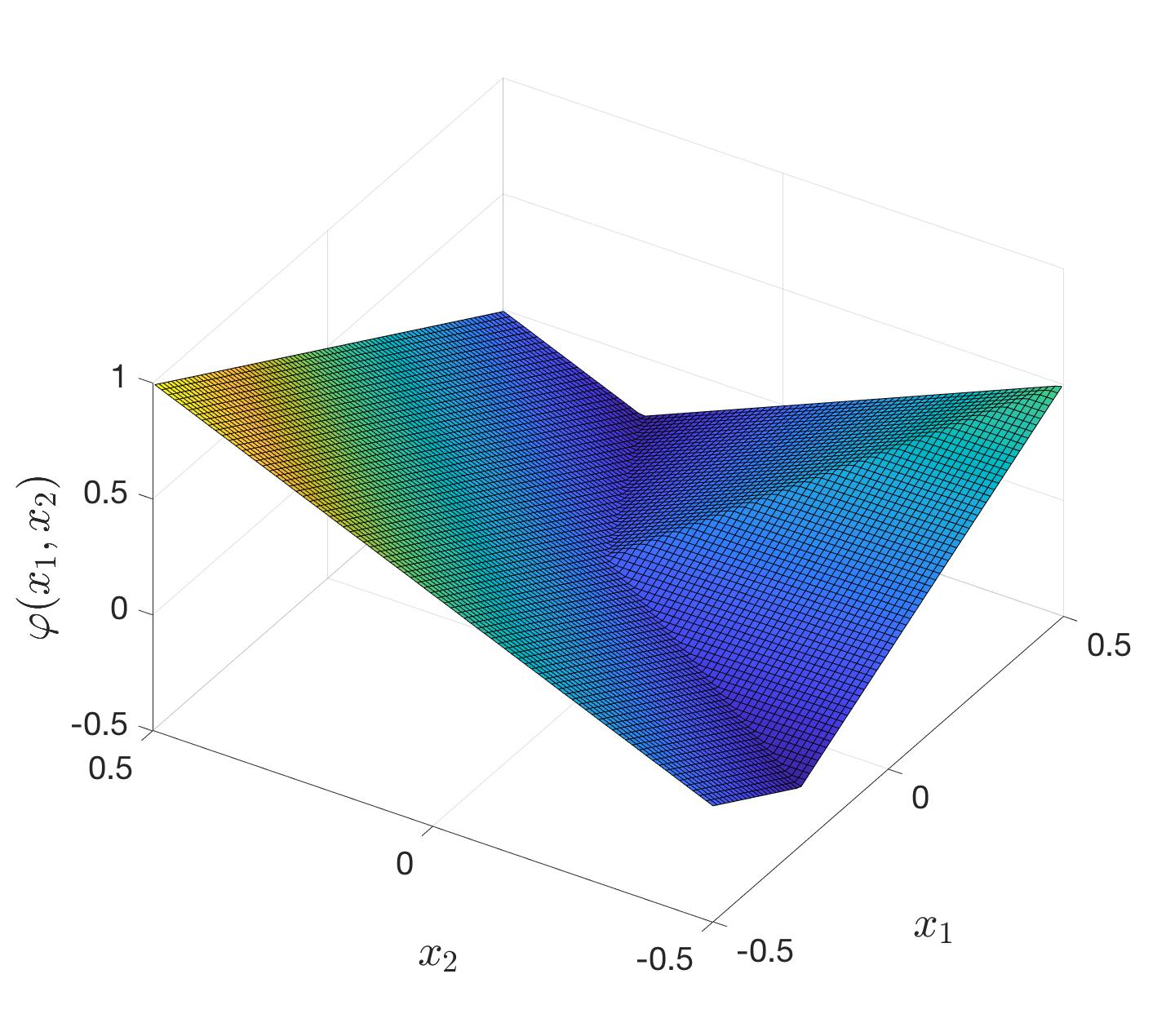}
  \caption{Graph around the origin.}
  \label{fig:irregular}
\end{subfigure}
\begin{subfigure}{.49\textwidth}
  \centering
   \vspace*{7.6mm}
  \begin{tikzpicture}[scale=1.5]
\draw[->] (-1.3,0)--(1.3,0) node[right]{$x_1$};
\draw[->] (0,-1.2)--(0,1.2) node[above]{$x_2$};
\fill[blue,opacity=0.2] (0,0) rectangle (-1,1);
\fill[magenta,opacity=0.2] (-1,1)--(1,0)--(0,-1);
\fill (1.1,.7)  node {\tiny $\partial_{x_1} \varphi(0,0) \times \partial_{x_2} \varphi(0,0)$};
\draw[->] (.18,.73) -- (-.2,.8);
\fill (.93,-.65)  node {\tiny $\partial \varphi(0,0)$};
\draw[->] (.6,-.62) -- (.3,-.4);
\fill (1,0)  node[below]{\tiny 1};
\fill (0,1)  node[right]{\tiny 1};
\fill (.1,0)  node[below]{\tiny 0};
\end{tikzpicture}
\vspace*{4mm}
  \caption{Subdifferentials at the origin.}
  \label{subdifs}
\end{subfigure}
\caption{Continuous semi-algebraic function that is not subdifferentially regular.
}
\end{figure}

\section{Proof of Theorem \ref{thm:landscape}}
\label{sec:landscape}
It will be convenient in our analysis to consider a factorization of $M$ whose rank we assume to be less than or equal to one. From now on, let $u\in \mathbb{R}^m$ and $v \in \mathbb{R}^n$ denote vectors such that $M=uv^T$. In order to introduce Lemma \ref{lemma:step}, observe that
\begin{subequations}
\begin{align}
    f(x,y) &= \sum\limits_{i=1}^m \sum\limits_{j=1}^n |x_iy_j-u_iv_j|\\
    & = \sum\limits_{u_i\neq 0} \sum\limits_{j=1}^n |x_iy_j-u_iv_j| + \sum\limits_{u_i=0} \sum\limits_{j=1}^n |x_iy_j| \\
    & = \sum\limits_{u_i\neq 0} |u_i| \sum\limits_{j=1}^n |y_j(x_i/u_i)-v_j| + \sum\limits_{j=1}^n |y_j| \sum\limits_{u_i=0}  |x_i|
\end{align}
\end{subequations}
where we use the convention that a sum over an index set which is empty is equal to zero. Similarly,
\begin{subequations}
\begin{align}
    f(x,y) & = \sum\limits_{j=1}^n \sum\limits_{i=1}^m |x_iy_j-u_iv_j|\\
    & = \sum\limits_{v_j\neq 0} \sum\limits_{i=1}^m |x_iy_j-u_iv_j| + \sum\limits_{v_j=0} \sum\limits_{i=1}^m |x_iy_j| \\
    & = \sum\limits_{v_j\neq 0} |v_j| \sum\limits_{i=1}^m |x_i(y_j/v_j)-u_i| + \sum\limits_{i=1}^m |x_i| \sum\limits_{v_j=0}  |y_j|.
\end{align}
\end{subequations}
As a result,
\begin{subequations} 
    \begin{align}
    \label{subeq:intro_a} f(x,y) & =  \sum\limits_{u_i\neq 0} |u_i| \alpha(x_i/u_i) + \|y\|_1 \sum\limits_{u_i= 0} |x_i| , \\
    f(x,y) & =  \sum\limits_{v_j\neq 0} |v_j| \beta(y_j/v_j) + \|x\|_1\sum\limits_{v_j=0} |y_j|,
    \label{subeq:intro_b}  
    \end{align}
\end{subequations}
where the functions $\alpha$ and $\beta$ are defined from $\mathbb{R}$ to $\mathbb{R}$ by
\begin{equation}
\label{eq:alpha_beta}
    \alpha(t)  := \sum\limits_{j=1}^n |y_j t-v_j| ~~~\text{and}~~~
    \beta(t)  := \sum\limits_{i=1}^m |x_i t-u_i|
\end{equation}
and $\|\cdot\|_1$ is the $\ell_1$-norm. The partial Clarke subdifferentials \cite[p. 48]{clarke1990} of $f$ are
\begin{subequations}
\begin{align}
    \partial_{x_i} f(x,y) = \left\{
    \begin{array}{cc}
        \mathrm{sign}(u_i) \partial \alpha (x_i/u_i)  & \text{if}~ u_i\neq 0, \\[1mm]
        \mathrm{sign}(x_i) \|y\|_1  & \text{if}~ u_i = 0, 
    \end{array}
    \right. \label{subdif_x} \\[3mm]
    \partial_{y_j} f(x,y) = \left\{
    \begin{array}{cc}
        \mathrm{sign}(v_j) \partial \beta (y_j/v_j)  & \text{if}~ v_j\neq 0, \\[1mm]
        \mathrm{sign}(y_j) \|x\|_1  & \text{if}~ v_j = 0,
    \end{array}
    \right. \label{subdif_y}
\end{align}
\end{subequations}
where 
\begin{equation}
\label{eq:step_functions}
    \partial \alpha(t) = \sum_{j=1}^n  \mathrm{sign}(y_jt-v_j)y_j ~~~\text{and}~~~ \partial \beta(t) = \sum\limits_{i=1}^m \mathrm{sign}(x_it-u_i)x_i.
\end{equation}
Since $\alpha$ and $\beta$ are convex piecewise affine functions, their subdifferentials $\partial \alpha$ and $\partial \beta$ are non-decreasing step functions, as can be seen in Figure \ref{fig:step}. From the expressions of $\partial \alpha(t)$ and $\partial \beta(t)$ in \eqref{eq:step_functions}, it follows that the jumps between the steps of $\partial \alpha$ occur at $v_j/y_j$ for all index $j$ such that $y_j\neq 0$, while those of $\partial \beta$ occur at $u_i/x_i$ such that $x_i \neq 0$. Observe that 0 is a root of $\partial \alpha$ and $\partial \beta$ in Figure \ref{fig:step}. This is true whenever $(x,y)$ is a critical point of $f$ and it is not a global minimum of $f$. Such is the object of Lemma \ref{lemma:step} below.

\begin{figure}[ht!]
\centering
\begin{subfigure}{.49\textwidth}
  \centering
  \begin{tikzpicture}[scale=0.75]
\draw[help lines, color=gray!200, dashed] (-3.3,-2.3) grid (3.3,2.3);
\draw[->] (-3.3,0)--(3.4,0) node[right]{$t$};
\draw[->] (0,-2.3)--(0,2.4) node[above]{$\partial \alpha(t)$};
\draw[blue,thick] (-3,-2)--(-1.9813,-2) ;
\draw[blue,thick] (-2,-2)--(-2,0) ;
\draw[blue,thick] (-2.0187,0)--(1.0187,0) ;
\draw[blue,thick] (1,0)--(1,2) ;
\draw[blue,thick] (0.9813,2)--(3,2) ;
\draw (-3,-2.3) node[below]{\tiny{-3}};
\draw (-2,-2.3) node[below]{\tiny{-2}};
\draw (-1,-2.3) node[below]{\tiny{-1}};
\draw (0,-2.3) node[below]{\tiny{0}};
\draw (1,-2.3) node[below]{\tiny{1}};
\draw (2,-2.3) node[below]{\tiny{2}};
\draw (3,-2.3) node[below]{\tiny{3}};
\draw (-3.3,-2) node[left]{\tiny{-2}};
\draw (-3.3,-1) node[left]{\tiny{-1}};
\draw (-3.3,0) node[left]{\tiny{0}};
\draw (-3.3,1) node[left]{\tiny{1}};
\draw (-3.3,2) node[left]{\tiny{2}};
\end{tikzpicture}
  \caption{Subdifferential of $\alpha$}
\end{subfigure}
\begin{subfigure}{.49\textwidth}
  \centering
  \begin{tikzpicture}[scale=0.75]
\draw[help lines, color=gray!200, dashed] (-3.3,-2.3) grid (3.3,2.3);
\draw[->] (-3.3,0)--(3.4,0) node[right]{$t$};
\draw[->] (0,-2.3)--(0,2.4) node[above]{$\partial \beta(t)$};
\draw[blue,thick] (-3,-2)--(-1.9813,-2) ;
\draw[blue,thick] (-2,-2)--(-2,-1.33) ;
\draw[blue,thick] (-2.0187,-1.33)--(-.9813,-1.33) ;
\draw[blue,thick] (-1,-1.33)--(-1,0) ;
\draw[blue,thick] (-1.0187,0)--(1.0187,0) ;
\draw[blue,thick] (1,0)--(1,1.33) ;
\draw[blue,thick] (0.9813,1.33)--(2.0187,1.33) ;
\draw[blue,thick] (2,1.33)--(2,2) ;
\draw[blue,thick] (1.9813,2)--(3,2) ;
\draw (-3,-2.3) node[below]{\tiny{-3}};
\draw (-2,-2.3) node[below]{\tiny{-2}};
\draw (-1,-2.3) node[below]{\tiny{-1}};
\draw (0,-2.3) node[below]{\tiny{0}};
\draw (1,-2.3) node[below]{\tiny{1}};
\draw (2,-2.3) node[below]{\tiny{2}};
\draw (3,-2.3) node[below]{\tiny{3}};
\draw (-3.3,-2) node[left]{\tiny{-6}};
\draw (-3.3,-1) node[left]{\tiny{-3}};
\draw (-3.3,0) node[left]{\tiny{0}};
\draw (-3.3,1) node[left]{\tiny{3}};
\draw (-3.3,2) node[left]{\tiny{6}};
\end{tikzpicture}
  \caption{Subdifferential of $\beta$}
\end{subfigure}
\caption{Step functions at the critical point $x = (2,-1,-1,1,-1)^T$, $y = (-1,-1/2,-1/2)^T$ where $u = (-2,-1,2,1,-2)^T$, $v = (-1,1,1)^T$.
}
\label{fig:step}
\end{figure}
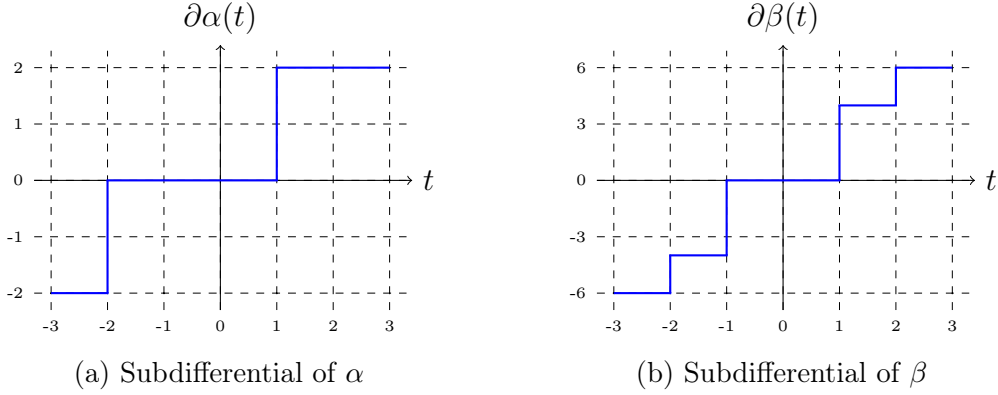
\begin{lemma}
\label{lemma:step}
$0 \in \partial f(x,y) ~~~ \Longrightarrow ~~~ f(x,y) = 0 ~~\text{or}~~ 0 \in \partial \alpha(0) \cap \partial \beta(0)$.
\end{lemma}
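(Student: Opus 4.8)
The plan is to establish that $0 \in \partial f(x,y)$ forces $f(x,y)=0$ or $0\in\partial\alpha(0)$; transposing $M$ (which swaps the roles of $x,u,\alpha$ with $y,v,\beta$ and, by \eqref{eq:crit}, preserves criticality since $\Lambda\mapsto\Lambda^T$ converts the two defining equations into each other) then gives the twin statement with $\beta$ in place of $\alpha$. Granting both, if $f(x,y)\neq 0$ then the first yields $0\in\partial\alpha(0)$ and the second yields $0\in\partial\beta(0)$, so the lemma follows. Thus I assume $0\in\partial f(x,y)$, suppose for contradiction that $f(x,y)\neq 0$ and $0\notin\partial\alpha(0)$, and aim to derive $f(x,y)=0$.

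The first step is to mine the necessary condition \eqref{necessary_critical}, which gives $0\in\partial_{x_i}f(x,y)$ for every $i$. Since $0\notin\partial\alpha(0)$ forces $y\neq 0$, the convex function $\alpha$ is coercive, so its set of minimizers $I_\alpha$, which equals $\{t:0\in\partial\alpha(t)\}$, is a nonempty compact interval avoiding the origin; hence $I_\alpha\subset(0,\infty)$ or $I_\alpha\subset(-\infty,0)$. When $u_i\neq 0$, \eqref{subdif_x} gives $0\in\mathrm{sign}(u_i)\,\partial\alpha(x_i/u_i)$, so $t_i:=x_i/u_i\in I_\alpha$; in particular all the $t_i$ share one strict sign. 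When $u_i=0$, \eqref{subdif_x} gives $0\in\mathrm{sign}(x_i)\|y\|_1$ with $\|y\|_1>0$, forcing $x_i=0$. Hence $\mathrm{supp}(x)\subset\mathrm{supp}(u)$ and $x_i=t_iu_i$ throughout.

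Next I would invoke the full critical condition \eqref{eq:crit}: fix $\Lambda\in\mathrm{sign}(xy^T-M)$ with $\Lambda y=0$ and $\Lambda^Tx=0$. For $u_i\neq 0$, the identity $x_iy_j-u_iv_j=u_i(y_jt_i-v_j)$ shows that $\mu_{ij}:=\mathrm{sign}(u_i)\Lambda_{ij}$ lies in $\mathrm{sign}(y_jt_i-v_j)$, so $\mu_{ij}(y_jt_i-v_j)=|y_jt_i-v_j|$, while $(\Lambda y)_i=0$ forces $\sum_j\mu_{ij}y_j=0$. Combining these gives
\[
\alpha(t_i)=\sum_{j=1}^n|y_jt_i-v_j|=\sum_{j=1}^n\mu_{ij}(y_jt_i-v_j)=t_i\sum_{j=1}^n\mu_{ij}y_j-\sum_{j=1}^n\mu_{ij}v_j=-\sum_{j=1}^n\mu_{ij}v_j .
\]
On the other hand, using $x_i=t_i|u_i|\mathrm{sign}(u_i)$ when $u_i\neq 0$ and $x_i=0$ otherwise, the column condition reads $\sum_{i:u_i\neq 0}t_i|u_i|\mu_{ij}=(\Lambda^Tx)_j=0$ for every $j$. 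Evaluating $\sum_{i,j}t_i|u_i|\mu_{ij}v_j$ in the two orders,
\[
0=\sum_{j=1}^n v_j\sum_{i:u_i\neq 0}t_i|u_i|\mu_{ij}=\sum_{i:u_i\neq 0}t_i|u_i|\sum_{j=1}^n\mu_{ij}v_j=-\sum_{i:u_i\neq 0}t_i|u_i|\,\alpha(t_i).
\]
Since the $t_i$ all have one strict sign, $|u_i|>0$, and $\alpha(t_i)\geqslant 0$, each summand has the same sign and so vanishes, giving $\alpha(t_i)=0$ for every $i$ with $u_i\neq 0$. Plugging this and $x_i=0$ (for $u_i=0$) into \eqref{subeq:intro_a} yields $f(x,y)=0$, the desired contradiction.

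I expect the crux to be discovering this double-counting identity. One must notice that the necessary condition alone already confines $x$ to $\mathrm{supp}(u)$ and the ratios $t_i=x_i/u_i$ to $\mathrm{argmin}\,\alpha$, hence to a common sign once the origin is excluded; and then that summing $t_i|u_i|\mu_{ij}v_j$ one way is annihilated by the column part $\Lambda^Tx=0$ of criticality while summing the other way reproduces $-\sum_i t_i|u_i|\alpha(t_i)$ via the row part $\Lambda y=0$ together with the piecewise-affine structure of $\alpha$; reconciling the two answers is exactly what forces $f$ to vanish. Everything else — the convexity facts about $\alpha$, the sign bookkeeping, and the transposition symmetry — is routine.
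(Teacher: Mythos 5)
Your proof is correct, and it reaches the conclusion by a genuinely different and considerably shorter route than the paper. Both arguments share the same skeleton --- first the necessary condition \eqref{necessary_critical} coming from subdifferential regularity, then the full critical condition \eqref{eq:crit} tested against a linear functional of $\Lambda$ that criticality forces to vanish yet which has a definite sign --- but they diverge in how that functional is produced. The paper keeps the two conclusions coupled (contradicting $0\in\partial\alpha(0)\cap\partial\beta(0)$ jointly) and runs a lengthy case analysis on the roots and jump points of the non-decreasing step functions $\partial\alpha$ and $\partial\beta$, eventually extracting the two-value dichotomy $\mu,\nu$ for the ratios and hand-building the direction $(h,k)$ of \eqref{hk} so that $\gamma(\Lambda)=h^T\Lambda y+x^T\Lambda k$ is simultaneously $0$ and strictly negative. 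You instead decouple the two conjuncts (legitimately, since transposition swaps the roles of $\alpha$ and $\beta$ while preserving $f$ and criticality), observe that $0\notin\partial\alpha(0)$ together with convexity and coercivity of $\alpha$ confines its minimizer set --- hence all the ratios $t_i=x_i/u_i$ with $u_i\neq 0$ --- to one side of the origin, and then test the single functional $x^T\Lambda v=\sum_{i,j}t_i|u_i|\mu_{ij}v_j$: it vanishes because $\Lambda^Tx=0$, while $\Lambda y=0$ combined with $\mu_{ij}\in\mathrm{sign}(y_jt_i-v_j)$ identifies it with $-\sum_{u_i\neq 0}t_i|u_i|\,\alpha(t_i)$, a one-signed sum. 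That forces $\alpha(t_i)=0$ for every $i$ with $u_i\neq 0$, and together with $x_i=0$ whenever $u_i=0$, identity \eqref{subeq:intro_a} gives $f(x,y)=0$. Every step checks out, including the degenerate cases ($y=0$ or $v=0$ already give $0\in\partial\alpha(0)$; $u=0$ forces $x=0$ and hence $f=0$ directly), so the entire root-and-jump case analysis of the paper is eliminated.
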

\begin{proof}
Assume that $0 \in \partial f(x,y)$. Since $f$ is the composition of a convex function and a continuously differentiable function, by virtue of \cite[Corollary p. 32]{clarke1990}, \cite[2.3.6 Proposition (b)]{clarke1990}, and \cite[2.3.10 Chain Rule II]{clarke1990}, it is subdifferentially regular \cite[2.3.4 Definition]{clarke1990}. It follows from \cite[2.3.15 Proposition]{clarke1990} that $\partial f(x,y) \subset \partial_{x_1}f(x,y) \times \hdots \times \partial_{x_m}f(x,y) \times \partial_{y_1}f(x,y) \times \hdots \times \partial_{y_n}f(x,y)$. Hence $0 \in \partial_{x_i} f(x,y)$ and $0 \in \partial_{y_j} f(x,y)$ for all indices $i$ and $j$. Based on the expressions of the partial Clarke subdifferentials in \eqref{subdif_x}-\eqref{subdif_y}, we get that $0 \in \partial \alpha (x_i/u_i) \cap \partial \beta (y_j/v_j)$ for any indices $i$ and $j$ such that $u_i\neq 0$ and $v_j \neq 0$. In other words, whenever they are well-defined, the ratios $x_i/u_i$ and $y_j/v_j$ are roots of $\partial \alpha$ and $\partial \beta$, respectively.

We next reason by contradiction and assume that $f(x,y)>0$ and $0 \notin \partial \alpha(0) \cap \partial \beta(0)$. If $u=0$ and $v=0$, then from \eqref{eq:step_functions} we get that $\partial \alpha(t) = \|y\|_1 \mathrm{sign}(t)$ and $\partial \beta(t) = \|x\|_1 \mathrm{sign}(t)$, so that $0 \in \partial \alpha (0) \cap \partial \beta(0)$. As a result, either $u \neq 0$ or $v \neq 0$. Assume that $u=0$. Based on what was just said, $v \neq 0$. Also, $x\neq 0$ and $y\neq 0$, otherwise $f(x,y)=0$. Since $u=0$, from the expression of partial Clarke subdifferential in \eqref{subdif_x} we find that $0 \in \partial f_{x_i}(x,y) = \mathrm{sign}(x_i)\|y\|_1$ for all index $i$. Thus $x=0$, which is a contradiction. We deduce that $u \neq 0$, and by the same reasoning, $v\neq 0$. Assume that $x=0$. Then, from \eqref{eq:step_functions}, we get that $\partial \beta (t) = 0$ for all $t\in \mathbb{R}$, and in particular, $0 \in \partial \beta (0)$. In addition, $x_i/u_i$ is a root of $\partial \alpha$ whenever $u_i\neq 0$, and since $u\neq 0$, $0$ is a root of $\partial \alpha$. As result, $0 \in \partial \alpha(0) \cap \partial \beta(0)$, which is a contradiction. We deduce that $x \neq 0$, and by the same reasoning, $y\neq 0$. In addition, for all indices $i$ and $j$ such that $u_i=0$ and $v_j=0$, we have $0 \in \partial f_{x_i}(x,y) = \mathrm{sign}(x_i)\|y\|_1$ and $0 \in \partial f_{y_j}(x,y) = \mathrm{sign}(y_j)\|x\|_1$, whence $x_i = 0$ and $y_j = 0$. To sum up, so far we have shown that $u\neq 0$, $v\neq 0$, $x\neq 0$, $y\neq 0$, $x_i = 0$ if $u_i = 0$, and $y_j = 0$ if $v_j = 0$.

We next analyze the roots and jumps of $\partial \alpha$ and $\partial \beta$. Neither step function has a jump at the origin, otherwise there exists $i$ and $j$ such that $y_j \neq 0$ yet $v_j = 0$, and $x_i \neq 0$ yet $u_i = 0$. As for the values of the step functions at the origin, they cannot both be zero otherwise $0 \in \partial \alpha (0) \cap \partial \beta(0)$. Without loss of generality, we may thus assume from now on that $\partial \alpha(0) \neq 0$. If the non-decreasing function $\partial \alpha$ has a negative and a positive root, then $\partial \alpha (0) = 0$. As a result, without loss of generality, we may assume that $x_i/u_i>0$ for all index $i$ such that $u_i \neq 0$ (and in particular, $x_i \neq 0$ if $u_i \neq 0$). If $\partial \alpha$ has no positive jump point that is less than or equal to each root $x_i/u_i$, then $\partial \alpha (0) = 0$. Hence, let $v_{j_0}/y_{j_0}$ be such a jump point, where $y_{j_0} \neq 0$. We thus have $0<v_{j_0}/y_{j_0}\leqslant x_i/u_i$ for all index $i$ such that $u_i\neq 0$, as illustrated in Figure \ref{proof_a}.

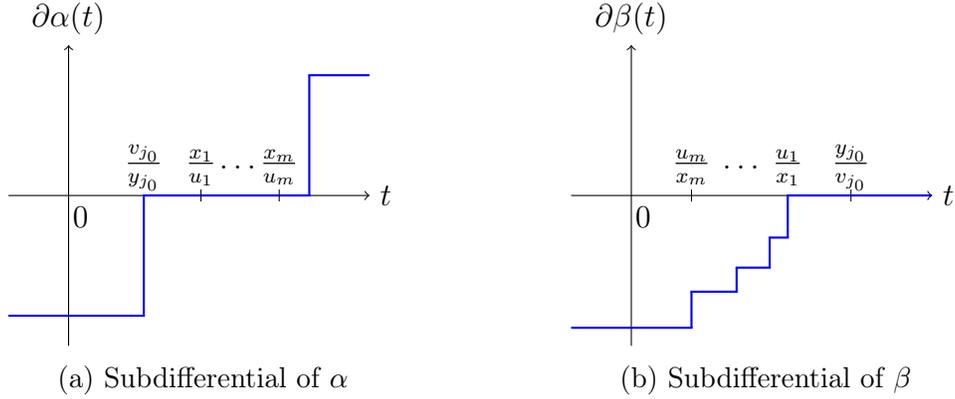
\begin{figure}[ht!]
\centering
\begin{subfigure}{.49\textwidth}
  \centering
  \begin{tikzpicture}[scale=0.8]
\draw[->] (-1,2)--(5,2) node[right]{$t$};
\draw[->] (0,-.5)--(0,4.5) node[above]{$\partial \alpha(t)$};
\draw[blue,thick] (-1,0)--(1.2675,0) ;
\draw[blue,thick] (1.25,0)--(1.25,2) ;
\draw[blue,thick] (1.2325,2)--(4.0175,2) ;
\draw[blue,thick] (4,2)--(4,4);
\draw[blue,thick] (3.9825,4)--(5,4) ;
\draw (1.25,2.47) node {$\frac{v_{j_0}}{y_{j_0}}$};
\draw (2.2,1.9)--(2.2,2.1);
\draw (2.2,2.47) node {$\frac{x_1}{u_1}$};
\draw (2.85,2.35);
\draw (2.81,2.47) node {$\hdots$};
\draw (3.5,1.9)--(3.5,2.1);
\draw (3.5,2.47) node {$\frac{x_m}{u_m}$};
\fill (.2,2)  node[below]{0};
\end{tikzpicture}
  \caption{Subdifferential of $\alpha$}
  \label{proof_a}
\end{subfigure}
\begin{subfigure}{.49\textwidth}
  \centering
  \begin{tikzpicture}[scale=0.8]
\draw[->] (-1,2)--(5,2) node[right]{$t$};
\draw[->] (0,-.5)--(0,4.5) node[above]{$\partial \beta(t)$};
\draw[blue,thick] (-1,-.2)--(1.0175,-.2) ;
\draw[blue,thick] (1,-.2)--(1,0.4) ;
\draw[blue,thick] (0.9825,0.4)--(1.7675,0.4) ;
\draw[blue,thick] (1.75,0.4)--(1.75,.8) ;
\draw[blue,thick] (1.7325,.8)--(2.3175,.8) ;
\draw[blue,thick] (2.3,.8)--(2.3,1.3) ;
\draw[blue,thick] (2.2825,1.3)--(2.6175,1.3) ;
\draw[blue,thick] (2.6,1.3)--(2.6,2) ;
\draw[blue,thick] (2.5825,2)--(4.9825,2) ;
\draw (3.65,1.9)--(3.65,2.1);
\draw (3.65,2.47)  node {$\frac{y_{j_0}}{v_{j_0}}$};
\draw (1,1.9)--(1,2.1);
\draw (1,2.47) node {$\frac{u_m}{x_m}$};
\draw (1.82,2.47) node {$\hdots$};
\draw (2.6,2.47) node {$\frac{u_1}{x_1}$};
\fill (.2,2)  node[below]{0};
\end{tikzpicture}
  \caption{Subdifferential of $\beta$}
  \label{proof_b}
\end{subfigure}
\caption{Visualization of the proof of Lemma \ref{lemma:step}.}
\end{figure}

We next discuss the repercussions of the jump point $v_{j_0}/y_{j_0}$ of $\partial \alpha$ on the roots and jumps of $\partial \beta$. Inverting yields that $0<u_i/x_i\leqslant y_{j_0}/v_{j_0}$ for all index $i$ such that $u_i\neq 0$. As was shown in the second paragraph, $x_i=0$ if $u_i=0$, hence the previous sentence is true for all index $i$ such that $x_i \neq 0$. Hence all the jump points of $\partial \beta$ are less than or equal to one of its roots $y_{j_0}/v_{j_0}$. This is illustrated in Figure \ref{proof_b}. Assume that $\partial \beta (y_{j_0}/v_{j_0}) \subset (-\infty,0]$, which is true if $y_{j_0}/v_{j_0}$ is not a jump, as it is represented in Figure \ref{proof_b}. Then $\partial \beta(t) = 0$ for all $t \geqslant y_{j_0}/v_{j_0}$. As remarked in parenthesis in the previous paragraph, $u_i=0$ if $x_i=0$, hence for all $t$ large enough we have $\mathrm{sign}(x_it-u_i) = \mathrm{sign}(x_it)$. Thus $\partial \beta (t) = \sum_{i=1}^m \mathrm{sign}(x_it)x_i = \sum_{i=1}^m |x_i|$ for all $t$ large enough. It follows that $x=0$, which is contradiction. As a result, there exists $\epsilon>0$ such that $[0,\epsilon] \subset \partial \beta (y_{j_0}/v_{j_0})$, and in particular, $y_{j_0}/v_{j_0}$ is a jump point of $\partial \beta$. Since $\partial \beta$ is a non-decreasing step function, it has no roots greater than $y_{j_0}/v_{j_0}$.

We next consider the case where $[-\epsilon,\epsilon] \subset \partial \beta (y_{j_0}/v_{j_0})$, after possibly reducing $\epsilon>0$. Since $\partial \beta$ is a non-decreasing step function, it has no roots less than $y_{j_0}/v_{j_0}$. Since $y_{j}/v_{j}$ is a root of $\partial \beta$ whenever $v_{j}\neq 0$, it follows that $y_{j}/v_{j} = y_{j_0}/v_{j_0}>0$ for all $v_{j}\neq 0$ (and in particular, we have $y_j \neq 0$ if $v_j \neq 0$). Inverting yields that $v_j/y_j = v_{j_0}/y_{j_0}$ for all $v_j\neq 0$. As was shown in the second paragraph, if $y_j \neq 0$, then $v_j \neq 0$. It follows that $v_j/y_j = v_{j_0}/y_{j_0}$ for all $y_j\neq 0$. Hence $v_{j_0}/y_{j_0}$ is the unique jump point of $\partial \alpha$. Recall that $0<v_{j_0}/y_{j_0}\leqslant x_i/u_i$ and $x_i/u_i$ is a root of $\partial \alpha$ for all index $i$ such that $u_i\neq 0$. Assume that there exists an index $i_0$ such that $x_{i_0}/u_{i_0}$ such that $v_{j_0}/y_{j_0} < x_{i_0}/u_{i_0}$. Then $\partial \alpha(t) = 0$ for all $t \geqslant x_{i_0}/u_{i_0}$. As remarked in parenthesis in this paragraph, it holds that $v_j=0$ if $y_j=0$, and thus for all $t$ large enough we have $\mathrm{sign}(y_jt-v_j) = \mathrm{sign}(y_jt)$. Hence $\partial \alpha (t) = \sum_{j=1}^n \mathrm{sign}(y_jt)y_j = \sum_{j=1}^n |y_j|$ for all $t$ large enough. It follows that $y=0$, which is contradiction. As a result, $v_{j}/y_{j} = v_{j_0}/y_{j_0} = x_i/u_i$ for all indices $i$ and $j$ such that $u_i\neq 0$ and $y_j \neq 0$, and hence $x_iy_j-u_iv_j = 0$. Recall that $x_i=0$ if $u_i=0$, and $v_j = 0$ if $y_j = 0$. Hence $x_iy_j-u_iv_j = 0$ for all indices $i$ and $j$, which implies that $f(x,y)=0$. This is a contradiction.

It remains to consider the case where $\partial \beta (y_{j_0}/v_{j_0}) = [0,\epsilon]$, possibly after increasing $\epsilon>0$. Recall that all the jump points of $\partial \beta$, i.e. $u_i/x_i$ for all index $i$ such that $x_i\neq 0$, are less than or equal to $y_{j_0}/v_{j_0}$. Assume that $u_i/x_i = y_{j_0}/v_{j_0}$ for all index $i$ such that $x_i\neq 0$. Hence $y_{j_0}/v_{j_0}$ is unique jump point of $\partial \beta$. Since $\partial \beta (y_{j_0}/v_{j_0}) = [0,\epsilon]$, we have $\partial \beta (t) = 0$ for all $t\leqslant y_{j_0}/v_{j_0}$. Since $u_i=0$ if $x_i=0$, for all $t$ small enough (i.e., taking large negative values), we have $\mathrm{sign}(x_it-u_i) = -\mathrm{sign}(x_i)$. Hence $\partial \beta (t) = \sum_{i=1}^m -\mathrm{sign}(x_i)x_i = -\sum_{i=1}^m |x_i|$ for all $t$ small enough. It follows that $x=0$, which is contradiction. As a result, there exists an index $i_0$ such that $0 < u_{i_0}/x_{i_0} < y_{j_0}/v_{j_0}$. Recall that $y_j/v_j$ is a root of $\partial \beta$ for all index $j$ such that $v_j\neq 0$. Since $\partial \beta (y_{j_0}/v_{j_0}) = [0,\epsilon]$, we have $0 < u_{i_0}/x_{i_0} \leqslant y_j/v_j \leqslant y_{j_0}/v_{j_0}$ for all index $j$ such that $v_j\neq 0$ (and in particular, we have $y_j \neq 0$ if $v_j \neq 0$). Inverting yields $0 < v_{j_0}/y_{j_0} \leqslant v_j/y_j \leqslant x_{i_0}/u_{i_0}$ for all index $j$ such that $v_j\neq 0$. As was shown in the second paragraph, if $y_j \neq 0$ then $v_j \neq 0$, hence the previous sentence is true for all index $j$ such that $y_j \neq 0$. Since $\partial \beta (y_{j_0}/v_{j_0}) = [0,\epsilon]$, $y_{j_0}/v_{j_0}$ is a jump point of $\partial \beta$, so there exists $i_1$ such that $y_{j_0}/v_{j_0} = u_{i_1}/x_{i_1}$. We thus have $0 <  x_{i_1}/u_{i_1} = v_{j_0}/y_{j_0} \leqslant v_j/y_j \leqslant x_{i_0}/u_{i_0}$ for all index $j$ such that $y_j\neq 0$. If $x_{i_1}/u_{i_1} = v_{j_0}/y_{j_0} < v_j/y_j < x_{i_0}/u_{i_0}$, then $v_j/y_j$ is a jump point of $\partial \alpha$ located strictly between two of its roots, which is impossible. Hence for all index $j$ such that $y_j\neq 0$, either $v_j/y_j = \mu := x_{i_1}/u_{i_1}$ or $v_j/y_j = \nu := x_{i_0}/u_{i_0}$. If $v_j/y_j \neq \nu$ for all index $j$ such that $y_j \neq 0$, then one of the roots of $\partial \alpha$ is greater than all its jump points. Hence $\partial \alpha(t) = 0$ for all $t$ large enough. Together with the fact that $u_i=0$ if $x_i=0$, this yields a contradiction. Hence there exists $j_1$ such that $v_{j_1}/y_{j_1} = x_{i_0}/u_{i_0}$. We next show that the dichotomy $v_j/y_j = \mu$ or $v_j/y_j = \nu$ also holds for $x_i/u_i$ whenever $u_i\neq 0$.

Based on the previous paragraph, we have $0 <  x_{i_1}/u_{i_1} = v_{j_0}/y_{j_0} < v_{j_1}/y_{j_1} = x_{i_0}/u_{i_0}$. Inverting yields $u_{i_0}/x_{i_0} = y_{j_1}/v_{j_1} < y_{j_0}/v_{j_0} = u_{i_1}/x_{i_1}$. Recall that $y_j/v_j$ are roots of $\partial \beta$ for all index $j$ such that $v_j \neq 0$, and the jump points of $\partial \beta$ are $u_i/x_i$ for all index $i$ such that $x_i \neq 0$. The non-decreasing function $\partial \beta$ cannot have a jump point strictly between two of its roots, hence for all index $i$ such that $x_i \neq 0$, it holds that $0<u_i/x_i \leqslant u_{i_0}/x_{i_0} = y_{j_1}/v_{j_1} < y_{j_0}/v_{j_0} = u_{i_1}/x_{i_1}$ or $0< u_{i_0}/x_{i_0} = y_{j_1}/v_{j_1} < y_{j_0}/v_{j_0} = u_{i_1}/x_{i_1} \leqslant u_i/x_i$. Inverting yields $ x_{i_1}/u_{i_1} = v_{j_0}/y_{j_0} < v_{j_1}/y_{j_1} = x_{i_0}/u_{i_0} \leqslant x_i/u_i$ or $x_i/u_i \leqslant x_{i_1}/u_{i_1} = v_{j_0}/y_{j_0} < v_{j_1}/y_{j_1} = x_{i_0}/u_{i_0}$. If the inequality is strict in the first case, then $x_i/u_i$ is a root of $\partial \alpha$ that is greater than all of its jump points, of which there are two according to the previous paragraph, namely $v_{j_1}/y_{j_1}$ and $v_{j_0}/y_{j_0}$. If the inequality is strict in the second case, then $x_i/u_i$ is a root of $\partial \alpha$ that is less than all of its jump points. In either case, $\partial \alpha(t)= 0$ for all $t$ either small or large enough. Together with the fact that $v_j = 0$ if $y_j=0$ (as remarked in parenthesis in the previous paragraph), this yields a contradiction. Hence for all index $i$ such that $u_i\neq 0$, either $x_i/u_i = \mu$ or $x_i/u_i = \nu$, and each case clearly happens for some index (the first with $i_1$, the second with $i_0$). In light of the dichotomy exposed above, consider $(h,k) \in \mathbb{R}^m \times \mathbb{R}^n$ defined by
\begin{equation}
\label{hk}
    h_i := \left\{
    \begin{array}{cl}
        0 & \text{if}~ x_i/u_i = \mu, \\
        -u_i \nu & \text{if}~ x_i/u_i = \nu, \\
        0 & \text{if}~ u_i = 0,
    \end{array}
    \right.
    ~~~~~~\text{and}~~~~~~
    k_j := \left\{
    \begin{array}{cl}
         0 & \text{if}~ y_j/v_j = 1/\mu, \\
        v_j / \nu & \text{if}~ y_j/v_j = 1/\nu, \\
        0 & \text{if}~ v_j = 0.
    \end{array}
    \right.
\end{equation}
Consider also the function $\gamma$ defined from $\mathbb{R}^{m\times n}$ to $\mathbb{R}$ by $\gamma(Q) := h^T Q y + x^T Q k$. So far in the proof, we have only used the necessary condition for being critical provided by \eqref{necessary_critical}. We next invoke the critical condition, as discussed in the introduction. According to the critical condition \eqref{eq:crit}, there exists $\Lambda \in \mathrm{sign}(xy^T-uv^T)$ such that $\Lambda y = 0$ and $\Lambda^Tx = 0$. Thus $\gamma(\Lambda) = h^T \Lambda y + x^T \Lambda k = h^T (\Lambda y) + (\Lambda^T x)^T k = 0$. Yet, observe that the image $\gamma\left(\mathrm{sign}(xy^T-uv^T)\right) = \hdots$
\begin{subequations}
        \begin{align}
             = & \sum_{i=1}^m \sum_{j=1}^n \mathrm{sign}(x_iy_j-u_iv_j)(h_iy_j + x_ik_j) \label{gamma} \\
             = & \sum\limits_{\frac{x_i}{u_i} = \mu} \sum\limits_{\frac{y_j}{v_j} = \frac{1}{\mu}} \mathrm{sign}(x_iy_j-u_iv_j)(0 \times y_j + x_i\times 0) ~ + \label{hkb}\\
            & \sum\limits_{\frac{x_i}{u_i} = \mu} \sum\limits_{\frac{y_j}{v_j} = \frac{1}{\nu}} \mathrm{sign}(x_iy_j-u_iv_j)(0 \times y_j + x_i \times v_j /\nu) ~ + \label{hkc}\\
            & \sum\limits_{\frac{x_i}{u_i} = \mu} \sum\limits_{v_j = 0} \mathrm{sign}(x_iy_j-u_iv_j)(0 \times y_j + x_i \times 0) ~ + \label{hkd}\\
        & \sum\limits_{\frac{x_i}{u_i} = \nu} \sum\limits_{\frac{y_j}{v_j} = \frac{1}{\mu}} \mathrm{sign}(x_iy_j-u_iv_j)(-u_i\nu \times y_j + x_i \times 0) ~ + \label{hke}\\
        & \sum\limits_{\frac{x_i}{u_i} = \nu} \sum\limits_{\frac{y_j}{v_j} = \frac{1}{\nu}} \mathrm{sign}(x_iy_j-u_iv_j)(-u_i\nu \times y_j + x_i \times v_j/\nu) ~+\label{hkf}\\
        & \sum\limits_{\frac{x_i}{u_i} = \nu} \sum\limits_{v_j = 0} \mathrm{sign}(x_iy_j-u_iv_j)(-u_i\nu \times y_j + x_i \times 0) ~+\label{hkg}\\
        & \sum\limits_{u_i = 0} \sum\limits_{\frac{y_j}{v_j} = \frac{1}{\mu}} \mathrm{sign}(x_iy_j-u_iv_j)(0 \times y_j + x_i \times 0) ~ + \label{hkh}\\
        & \sum\limits_{u_i = 0} \sum\limits_{\frac{y_j}{v_j} = \frac{1}{\nu}} \mathrm{sign}(x_iy_j-u_iv_j)(0 \times y_j + x_i \times v_j/\nu)~ + \label{hki}\\
        & \sum\limits_{u_i = 0} \sum\limits_{v_j = 0} \mathrm{sign}(x_iy_j-u_iv_j)(0 \times y_j + x_i \times 0) \label{hkj}\\
        = & \sum\limits_{\frac{x_i}{u_i} = \mu} \sum\limits_{\frac{y_j}{v_j} = \frac{1}{\nu}} \mathrm{sign}((\mu/\nu-1)u_iv_j) (u_i \mu v_j/\nu) ~ + \label{xy1}\\
        & \sum\limits_{\frac{x_i}{u_i} = \nu} \sum\limits_{\frac{y_j}{v_j} = \frac{1}{\mu}} \mathrm{sign}((\nu/\mu-1)u_iv_j)(-u_i\nu v_j/\mu)\label{xy2}\\
        = & - \mu/\nu \sum\limits_{\frac{x_i}{u_i} = \mu} \sum\limits_{\frac{y_j}{v_j} = \frac{1}{\nu}} |u_i v_j| - \nu/\mu \sum\limits_{\frac{x_i}{u_i} = \nu} \sum\limits_{\frac{y_j}{v_j} = \frac{1}{\mu}} |u_iv_j| ~<~ 0. \label{final}
        \end{align}
\end{subequations}
Above, \eqref{gamma} follows from the definition of the function $\gamma$. We substitute $h_i$ and $k_j$ using their definition in \eqref{hk}, which yields \eqref{hkb}-\eqref{hkj}. We next substitute $x_i$ and $y_j$ using their expressions below the summation signs and obtain \eqref{xy1}-\eqref{xy2}. Note that all but two terms cancel out: \eqref{hkb} cancels out for obvious reasons; \eqref{hkc} yields \eqref{xy1}; \eqref{hkd} cancels out for obvious reasons; \eqref{hke} yields \eqref{xy2}; \eqref{hkf} cancels out because $-u_i\nu \times y_j + x_i \times v_j/\nu = -u_i\nu \times v_j/\nu + u_i \nu \times v_j/\nu = 0$; \eqref{hkg} cancels out because $y_j=0$ if $v_j=0$, as shown in the second paragraph; \eqref{hkh} cancels out for obvious reasons; \eqref{hki} cancels out because $x_i = 0$ if $u_i=0$, as was shown in the second paragraph; \eqref{hkj} cancels out for obvious reasons. To get from \eqref{xy1}-\eqref{xy2} to \eqref{final}, we use the fact that $\mu/\nu-1<0$ and $\nu/\mu-1>0$ since $0 < \mu < \nu$. We also use the fact that $\mathrm{sign}(u_iv_j)u_iv_j = |u_iv_j|$. The result in \eqref{final} is negative because the summation takes place over non-empty sets: $x_{i_0}/u_{i_0} = \mu$, $y_{j_0}/v_{j_0} = 1/\nu$, $x_{i_1}/u_{i_1} = \nu$, and $y_{j_1}/v_{j_1} = 1/\mu$. (While it may seem strange at first, the image of the set $\mathrm{sign}(xy^T-uv^T)$ via the function $\gamma$ is actually a singleton.) In particular, $\gamma(\Lambda) < 0$ whereas we had shown above that $\gamma(\Lambda) = 0$. This is a contradiction and terminates the proof. 
\end{proof}

Thanks to Lemma \ref{lemma:step}, we may now remove the existence quantifier from the critical condition \eqref{eq:crit}. We obtain a finite number of unions and intersections of polynomial equations and inequalities, where the number is independent of the dimensions $m$ and $n$.
\begin{lemma}
\label{lemma:eliminate}
$(x,y) \in \mathbb{R}^m \times  \mathbb{R}^n$ is a critical point of $f$ if and only if
\begin{subequations}
    \begin{gather}
        x_iy_j = u_iv_j ~~ \text{for}~~i=1,\hdots,m,~j=1,\hdots,n, ~~~\text{or} \label{eliminate_a} \\[2mm]
         \left|\sum\limits_{u_i\neq 0} \mathrm{sign}(u_i) x_i\right| \leqslant \sum\limits_{u_i= 0} |x_i| ~~~ \text{and} ~~~ y = 0, ~~~\text{or} \label{eliminate_b} \\
         x = 0 ~~~ \text{and} ~~~ \left|\sum\limits_{v_j\neq 0} \mathrm{sign}(v_j) y_j\right| \leqslant \sum\limits_{v_j = 0} |y_j|, ~~~\text{or} \label{eliminate_c} \\[3mm]
         \sum\limits_{u_i\neq 0} \mathrm{sign}(u_i) x_i = \sum\limits_{v_j\neq 0} \mathrm{sign}(v_j) y_j = 0, ~~~~
        \frac{x_iy_j}{u_iv_j} \leqslant 1~~ \text{if}~~u_iv_j\neq 0, \label{eliminate_d} \\[3mm]  x_i = 0~~\text{if}~~u_i=0,~~~\text{and}~~~ y_j= 0~~\text{if}~~v_j=0. \label{eliminate_e}
    \end{gather}
\end{subequations}
\end{lemma}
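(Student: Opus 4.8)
The plan is to prove the two implications of the equivalence separately. For sufficiency one checks that each of the four alternatives forces $0 \in \partial f(x,y)$, which by~\eqref{eq:crit} means exhibiting $\Lambda \in \mathrm{sign}(xy^T - uv^T)$ with $\Lambda y = 0$ and $\Lambda^T x = 0$. If~\eqref{eliminate_a} holds then $xy^T - uv^T = 0$ and $\Lambda = 0$ works. If~\eqref{eliminate_d}--\eqref{eliminate_e} hold I would set $\Lambda_{ij} := -\mathrm{sign}(u_i)\mathrm{sign}(v_j)$ when $u_iv_j \neq 0$ and $\Lambda_{ij} := 0$ otherwise: the bound $x_iy_j/(u_iv_j) \leqslant 1$ gives $\Lambda_{ij} \in \mathrm{sign}(x_iy_j - u_iv_j)$ on the first set of entries, the conditions $x_i = 0$ if $u_i = 0$ and $y_j = 0$ if $v_j = 0$ give $x_iy_j - u_iv_j = 0$ and hence $0 \in \mathrm{sign}(x_iy_j-u_iv_j)$ on the remaining entries, while $\sum_{u_i\neq 0}\mathrm{sign}(u_i)x_i = \sum_{v_j\neq 0}\mathrm{sign}(v_j)y_j = 0$ yields $\Lambda y = 0$ and $\Lambda^T x = 0$ by expanding the sums. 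Cases~\eqref{eliminate_b} and~\eqref{eliminate_c} go the same way: when $y = 0$ one again puts $\Lambda_{ij} = -\mathrm{sign}(u_i)\mathrm{sign}(v_j)$ on the entries with $u_iv_j \neq 0$ and uses the slack in $|\sum_{u_i\neq 0}\mathrm{sign}(u_i)x_i| \leqslant \sum_{u_i = 0}|x_i|$ to choose the free entries $\Lambda_{ij}$ with $u_i = 0$ so that $\Lambda^T x = 0$, the identity $\Lambda y = 0$ being automatic.

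For necessity, suppose $0 \in \partial f(x,y)$. By the subdifferential regularity of $f$ recalled in the proof of Lemma~\ref{lemma:step}, $0 \in \partial_{x_i}f(x,y)$ and $0 \in \partial_{y_j}f(x,y)$ for all $i,j$, and by Lemma~\ref{lemma:step} either $f(x,y) = 0$, in which case~\eqref{eliminate_a} holds, or $0 \in \partial\alpha(0) \cap \partial\beta(0)$. In the second situation I would split into three sub-cases. If $y = 0$, then reading the set-valued expression $\partial\beta(0) = \sum_i \mathrm{sign}(-u_i)x_i$ from~\eqref{eq:step_functions} shows that $0 \in \partial\beta(0)$ is exactly $|\sum_{u_i\neq 0}\mathrm{sign}(u_i)x_i| \leqslant \sum_{u_i = 0}|x_i|$, which with $y = 0$ is~\eqref{eliminate_b}; the sub-case $x = 0$ is symmetric and gives~\eqref{eliminate_c}. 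If $x \neq 0$ and $y \neq 0$, then $0 \in \partial_{x_i}f(x,y) = \mathrm{sign}(x_i)\|y\|_1$ together with $\|y\|_1 > 0$ forces $x_i = 0$ whenever $u_i = 0$, and symmetrically $y_j = 0$ whenever $v_j = 0$; this is~\eqref{eliminate_e}, and it makes $\partial\alpha(0) = \{-\sum_{v_j\neq 0}\mathrm{sign}(v_j)y_j\}$ and $\partial\beta(0) = \{-\sum_{u_i\neq 0}\mathrm{sign}(u_i)x_i\}$ singletons, so $0 \in \partial\alpha(0)\cap\partial\beta(0)$ reduces to the first half of~\eqref{eliminate_d}.

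What remains, and what I expect to be the main obstacle, is the inequality $x_iy_j/(u_iv_j) \leqslant 1$ for $u_iv_j \neq 0$ in the case $x \neq 0$, $y \neq 0$. I would argue by contradiction. Suppose $(x_{i_0}/u_{i_0})(y_{j_0}/v_{j_0}) > 1$ for some $u_{i_0}v_{j_0} \neq 0$, and write $s := x_{i_0}/u_{i_0}$ and $r := y_{j_0}/v_{j_0}$, so $s$ and $r$ are nonzero of the same sign. From $0 \in \partial_{x_{i_0}}f(x,y) = \mathrm{sign}(u_{i_0})\partial\alpha(s)$ and convexity of $\alpha$, the point $s$ minimizes $\alpha$, and $0$ minimizes $\alpha$ since $0 \in \partial\alpha(0)$. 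Hence the convex piecewise affine function $\alpha$ is constant on the nondegenerate segment with endpoints $0$ and $s$, so it has no kink in the interior of that segment; equivalently, $\partial\alpha$ has no jump point strictly between $0$ and $s$. On the other hand $v_{j_0}/y_{j_0} = 1/r$ is a jump point of $\partial\alpha$ — in the expression for $\partial\alpha$ in~\eqref{eq:step_functions} every summand has a nonnegative jump, and the one indexed by $j_0$ jumps by $2|y_{j_0}| > 0$ at $v_{j_0}/y_{j_0}$ since $y_{j_0} \neq 0$ — and a short computation with the signs of $s$ and $r$ using $sr > 1$ shows $1/r$ lies strictly between $0$ and $s$, a contradiction. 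This yields~\eqref{eliminate_d} and completes the proof. Beyond this dichotomy for a one-variable convex piecewise affine function, the only technical work is the routine sign bookkeeping needed to verify the matrices $\Lambda$ in the sufficiency direction and to evaluate the set-valued expressions $\partial\alpha(0)$, $\partial\beta(0)$, $\partial_{x_i}f$ and $\partial_{y_j}f$.
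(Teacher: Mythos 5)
Your proof is correct and follows essentially the same route as the paper: Lemma \ref{lemma:step} together with subdifferential regularity reduces criticality to the partial conditions, the ratio inequalities in \eqref{eliminate_d} come from the fact that the nondecreasing step function $\partial\alpha$ cannot have a jump strictly between two of its roots (your ``constant between two minimizers'' phrasing is the same fact in disguise), and sufficiency in the case \eqref{eliminate_d}--\eqref{eliminate_e} is obtained from the very same matrix $\Lambda_{ij}=-\mathrm{sign}(u_i)\mathrm{sign}(v_j)$ on the entries with $u_iv_j\neq 0$. The only cosmetic difference is in the sufficiency of \eqref{eliminate_b}--\eqref{eliminate_c}, where you construct the free entries of $\Lambda$ by hand from the slack in the inequality, while the paper instead invokes the product formula $\partial_y f(x,y)=\partial_{y_1}f(x,y)\times\cdots\times\partial_{y_n}f(x,y)$; both amount to the same computation.
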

\begin{proof} ($\Longrightarrow$)
Assume that $0 \in \partial f(x,y)$. If $f(x,y) = 0$, then \eqref{eliminate_a} holds. Otherwise, we next prove that \eqref{eliminate_b}-\eqref{eliminate_e} hold. From the expression of $\partial \alpha (t)$ and $\partial \beta(t)$ in \eqref{eq:step_functions}, it follows that 
\begin{subequations}
    \begin{align}
        \partial \alpha(0) = -\sum_{j=1}^n  \mathrm{sign}(v_j)y_j = -\sum_{v_j\neq 0} \mathrm{sign}(v_j)y_j - \sum_{v_j = 0} \mathrm{sign}(0)y_j, \label{origin_a} \\
        \partial \beta(0) = - \sum\limits_{i=1}^m \mathrm{sign}(u_i)x_i = - \sum_{u_i \neq 0} \mathrm{sign}(u_i)x_i - \sum_{u_i = 0} \mathrm{sign}(0)x_i. \label{origin_b}
\end{align}
\end{subequations}
Since $0 \in \partial f(x,y)$ and $f(x,y) \neq 0$, from Lemma \ref{lemma:step} we get that $0 \in \partial \alpha(0) \cap \partial \beta(0)$. Together with \eqref{origin_a}-\eqref{origin_b}, this yields that
\begin{equation}
\label{inequalities}
    \left|\sum\limits_{u_i\neq 0} \mathrm{sign}(u_i) x_i\right| \leqslant \sum\limits_{u_i= 0} |x_i| ~~~\text{and}~~~ \left|\sum\limits_{v_j\neq 0} \mathrm{sign}(v_j) y_j\right| \leqslant \sum\limits_{v_j = 0} |y_j|.
\end{equation}
If $y=0$, then from the inequality on the left hand side of \eqref{inequalities}, we obtain \eqref{eliminate_b}. Likewise, if $x=0$, then from the inequality on the right hand side of \eqref{inequalities}, we obtain \eqref{eliminate_c}. If neither $x=0$ nor $y=0$, then for all indices $i$ and $j$ such that $u_i=0$ and $v_j=0$, we have $0 \in \partial_{x_i} f(x,y) = \mathrm{sign}(x_i)\|y\|_1$ and $0 \in \partial_{y_j} f(x,y) = \mathrm{sign}(y_j)\|x\|_1$, whence $x_i=0$ and $y_j=0$. Hence \eqref{eliminate_e} is true. In light of this, the inequalities in \eqref{inequalities} become equalities and equal to zero, yielding the equalities in \eqref{eliminate_d}. It remains to show the ratio inequalities in \eqref{eliminate_d}, for which we don't need to assume that neither $x$ nor $y$ are equal to zero. Indeed, the ratio inequalities trivially hold in this case. The ratio inequalities are the object of the next paragraph.

Since $0 \in \partial_{x_i} f(x,y)$ and $0 \in \partial_{y_j} f(x,y)$ for all indices $i$ and $j$, from the expressions of the partial Clarke sudifferentials in \eqref{subdif_x}-\eqref{subdif_y}, it follows that $0 \in \partial \alpha (x_i/u_i) \cap \beta (y_j/v_j)$ whenever $u_i\neq 0$ and $v_j \neq 0$. Recall from Lemma \ref{lemma:step} that we also have that $0 \in \partial \alpha(0) \cap \partial \beta(0)$. Observe that the non-decreasing function $\partial \alpha$ cannot contain a jump point between the root $0$ and any root $x_i/u_i$. Hence, for all index $j$ such that $y_j \neq 0$, if the jump point $v_j/y_j$ is positive, then it is greater than or equal to all the roots $x_i/u_i$, that is to say, $v_j/y_j \geqslant x_i/u_i$. If the jump point $v_j/y_j$ is negative, then it is less than or equal to all the roots $x_i/u_i$, that is to say, $v_j/y_j \leqslant x_i/u_i$. Multiplying both inequalities by $y_j/v_j$ yields $x_iy_j/(u_iv_j) \leqslant 1$ whenever $u_iv_j \neq 0$.

($\Longleftarrow$) If \eqref{eliminate_a} holds, then $(x,y)$ is global minimum of $f$ and hence a critical point according to the generalized Fermat rule \cite[2.3.2 Proposition]{clarke1990}. If \eqref{eliminate_b} holds, then from the expression of $\partial \beta(0)$ in \eqref{origin_b}, it follows that $0 \in \partial \beta(0)$. From the expression of $\partial_{y_j} f(x,y)$ in \eqref{subdif_y} and $y=0$, we then get that $0 \in \partial_{y_j} f(x,y)$ for all index $j$. Since $f(x,\cdot)$ is subdifferentially regular for all $x \in \mathbb{R}^m$, by virtue of \cite[2.3.15 Proposition]{clarke1990} it holds that $\partial_y f(x,y) \subset \partial_{y_1} f(x,y) \times \hdots \times \partial_{y_n} f(x,y)$. Using \cite[2.3.10 Chain Rule II]{clarke1990}, we actually find that $\partial_y f(x,y) = \partial_{y_1} f(x,y) \times \hdots \times \partial_{y_n} f(x,y) = \{ \Lambda^T x ~|~ \Lambda \in \mathrm{sign}(xy^T-uv^T)\}$. Thus there exists $\Lambda \in \mathrm{sign}(xy^T-uv^T)$ such that $\Lambda^T x = 0$. Since $y = 0$, we naturally also have that $\Lambda y = 0$. Hence the critical condition \eqref{eq:crit} holds. The same argument applies when \eqref{eliminate_c} holds. Assume that \eqref{eliminate_d}-\eqref{eliminate_e} hold. In order to exhibit a matrix $\Lambda$ in the critical condition \eqref{eq:crit}, we propose to define the following function: 
\begin{equation}
\mathrm{sgn}(t) :=
\left\{
\begin{array}{rl}
-1 & \text{if} ~ t < 0, \\
0 & \text{if} ~ t = 0, \\
1 & \text{if} ~ t > 0.
\end{array}
\right.
\end{equation}
Like the $\mathrm{sign}$ function, the $\mathrm{sgn}$ function applies to matrices term by term. Using this new function, we may rewrite the equalities in \eqref{eliminate_d} as 
\begin{subequations}
\begin{align}
    \sum\limits_{j=1}^n \mathrm{sgn}(v_j) y_j &= 0,~~~ i = 1,\hdots,m, \\
    \sum\limits_{i=1}^m \mathrm{sgn}(u_i) x_i &= 0,~~~ j = 1,\hdots,n.
\end{align}
\end{subequations}
After multiplying the above equations by $\mathrm{sgn}(u_i)$ and $\mathrm{sgn}(v_j)$ respectively, we obtain that 
\begin{subequations}
\begin{align}
    \sum\limits_{j=1}^n \mathrm{sgn}(u_iv_j) y_j &= 0,~~~ i = 1,\hdots,m, \\
    \sum\limits_{i=1}^m \mathrm{sgn}(v_ju_i) x_i &= 0,~~~ j = 1,\hdots,n.
\end{align}
\end{subequations}
In other words, $\Lambda y = 0 $ and $\Lambda^T x = 0$ with $\Lambda := -\mathrm{sgn}(uv^T)$. In order to show that $(x,y)$ is critical, it remains to show that $\Lambda \in \mathrm{sign}(xy^T-uv^T)$. If $u_iv_j=0$, then from \eqref{eliminate_e} we get that $x_iy_j = 0$. In that case, $-\mathrm{sgn}(u_iv_j) = 0$ and $\mathrm{sign}(x_iy_j-u_iv_j) = [-1,1]$. Hence $-\mathrm{sgn}(u_iv_j) \in \mathrm{sign}(x_iy_j-u_iv_j)$. If $u_iv_j \neq 0$, then from the inequalities in \eqref{eliminate_d} we get that $x_iy_j/(u_iv_j)-1 \leqslant 0$. In that case, $\mathrm{sign}(x_iy_j-u_iv_j) = \mathrm{sign}(u_iv_j)\mathrm{sign}(x_iy_j/(u_iv_j)-1) \ni -\mathrm{sign}(u_iv_j)  = -\mathrm{sgn}(u_iv_j)$.
\end{proof}

Thanks to Lemma \ref{lemma:eliminate}, we may now classify the critical points according to whether they are global minima, local minima that are not global minima (i.e. spurious local minima), or not local minima (i.e. saddle points).
\begin{proposition}
\label{prop:classify}
$f$ has the following properties when $u \neq 0$ and $v \neq 0$.
\begin{enumerate}[label={\arabic*)}]
    \item The global minima are all $(x,y) \in \mathbb{R}^m\times\mathbb{R}^n$ such that 
\begin{equation}
      \exists \theta \in \mathbb{R} \setminus \{0\}: ~~~  (x,y) = (u\theta,v/\theta) \label{eq:global}
\end{equation}
    \item The spurious local minima are all $(x,y) \in \mathbb{R}^m\times\mathbb{R}^n$ such that
    \begin{subequations}
\begin{gather}
     ~~~~~~\left|\sum\limits_{u_i\neq 0} \mathrm{sign}(u_i) x_i\right| < \sum\limits_{u_i= 0} |x_i|~~\text{and}~~ y=0,~~~~\text{or} \label{eq:spurious_a} \\
     x=0 ~~\text{and}~~\left|\sum\limits_{v_j\neq 0} \mathrm{sign}(v_j) y_j\right| < \sum\limits_{v_j = 0} |y_j|. \label{eq:spurious_b}
\end{gather}
\end{subequations}
    \item The saddle points are all $(x,y) \in \mathbb{R}^m\times\mathbb{R}^n$ such that
    \begin{subequations}
    \begin{gather}
         \left|\sum\limits_{u_i\neq 0} \mathrm{sign}(u_i) x_i\right| = \sum\limits_{u_i= 0} |x_i| ~~~ \text{and} ~~~ y = 0, ~~~\text{or} \label{saddle_a} \\
         x = 0 ~~~ \text{and} ~~~ \left|\sum\limits_{v_j\neq 0} \mathrm{sign}(v_j) y_j\right| = \sum\limits_{v_j = 0} |y_j|, ~~~\text{or} \label{saddle_b} \\[3mm]
         \sum\limits_{u_i\neq 0} \mathrm{sign}(u_i) x_i = \sum\limits_{v_j\neq 0} \mathrm{sign}(v_j) y_j = 0, ~~~~
        \frac{x_iy_j}{u_iv_j} \leqslant 1~~ \text{if}~~u_iv_j\neq 0, \label{saddle_c} \\[3mm]  x_i = 0~~\text{if}~~u_i=0,~~~\text{and}~~~ y_j= 0~~\text{if}~~v_j=0. \label{saddle_d}
    \end{gather}
\end{subequations}
\end{enumerate}
In addition, for all saddle point $(x,y)$, there exists a global minimum $(x^*,y^*)$ such that $(x^*,y^*)-(x,y)$ is a direction of descent. 
\end{proposition}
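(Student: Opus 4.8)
The plan is to feed the description of the critical points in Lemma~\ref{lemma:eliminate} into a case-by-case local analysis of $f$. First, for the global minima: since $f\geqslant 0$ and $f(u\theta,v/\theta)=0$ for every $\theta\neq 0$, all points of the form \eqref{eq:global} are global minima; conversely $f(x,y)=0$ forces $x_iy_j=u_iv_j$ for all $i,j$, and choosing $i_0,j_0$ with $u_{i_0}\neq 0$ and $v_{j_0}\neq 0$ (possible since $u\neq 0$ and $v\neq 0$) makes $x_{i_0},y_{j_0}\neq 0$, so that $\theta:=x_{i_0}/u_{i_0}$ together with $x_{i_0}y_j=u_{i_0}v_j$ and $x_iy_{j_0}=u_iv_{j_0}$ yields $(x,y)=(u\theta,v/\theta)$. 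This proves part~1) and identifies \eqref{eliminate_a} with the set of global minima. By Lemma~\ref{lemma:eliminate} every other critical point satisfies \eqref{eliminate_b}, \eqref{eliminate_c}, or \eqref{eliminate_d}-\eqref{eliminate_e}; in each of these three cases a short computation gives $f(x,y)=\|u\|_1\|v\|_1>0$, so none of them are global minima.

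Next I would show that the critical points with the strict inequality in \eqref{eq:spurious_a} are local minima (the case \eqref{eq:spurious_b} being symmetric). Writing a nearby point as $(x+\delta,\epsilon)$ with $y=0$ and taking $(\delta,\epsilon)$ small enough that $|(x_i+\delta_i)\epsilon_j|<|u_iv_j|$ whenever $u_iv_j\neq 0$, the sign of each term $(x_i+\delta_i)\epsilon_j-u_iv_j$ is determined; expanding $f(x+\delta,\epsilon)$ and using that $x_i=0$ wherever $u_i=0$ (the only place $u_iv_j=0$ occurs with $x_i$ possibly nonzero) yields a lower bound
\[
 f(x+\delta,\epsilon)-f(x,0)\ \geqslant\ \bigl(Q_\delta-|P_\delta|\bigr)\!\sum_{v_j\neq 0}\!|\epsilon_j|\ +\ (Q_\delta+R_\delta)\!\sum_{v_j=0}\!|\epsilon_j|,
\]
with $P_\delta:=\sum_{u_i\neq 0}\mathrm{sign}(u_i)(x_i+\delta_i)$, $Q_\delta:=\sum_{u_i=0}|x_i+\delta_i|$ and $R_\delta:=\sum_{u_i\neq 0}|x_i+\delta_i|$. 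The strictness $|P_0|<Q_0$ makes $Q_\delta-|P_\delta|\geqslant 0$ for $\delta$ near $0$, while $Q_\delta+R_\delta\geqslant 0$ always, so the bound is nonnegative near $(0,0)$ and $(x,0)$ is a local minimum, spurious because $f(x,0)=\|u\|_1\|v\|_1>0$.

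Then, for the saddle points and the closing assertion: a critical point that is neither a global minimum nor covered by the strict inequalities \eqref{eq:spurious_a}-\eqref{eq:spurious_b} must satisfy one of \eqref{saddle_a}, \eqref{saddle_b}, \eqref{saddle_c}-\eqref{saddle_d} and has $f(x,y)=\|u\|_1\|v\|_1>0$; I would show it is not a local minimum by producing a global minimum $(u\theta,v/\theta)$ along the segment to which $f$ strictly decreases. The engine is the exact identity, valid for all $\theta\neq 0$ and $t\in[0,1]$,
\[
 f\bigl((1-t)(x,y)+t(u\theta,v/\theta)\bigr)\ =\ (1-t)\,\phi_\theta(t),\qquad \phi_\theta(t):=\sum_{i,j}|a_{ij}+t\,b_{ij}|,
\]
where $a_{ij}:=x_iy_j-u_iv_j$ and $b_{ij}:=u_i\theta y_j+x_iv_j/\theta-u_iv_j-x_iy_j$, obtained by pulling a factor $1-t$ out of $x_i(t)y_j(t)-u_iv_j$ along $(x(t),y(t)):=(1-t)(x,y)+t(u\theta,v/\theta)$. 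The function $\phi_\theta$ is convex and piecewise affine with $\phi_\theta(0)=f(x,y)$, is affine on a right-neighborhood of $0$ with slope $\phi_\theta'(0^{+})$, and $\phi_\theta'(0^{+})-f(x,y)$ equals the one-sided derivative $f'\bigl((x,y);(u\theta-x,v/\theta-y)\bigr)$, which is $\geqslant 0$ since $(x,y)$ is critical and $f$ is subdifferentially regular. Hence if $\theta$ is chosen so that this derivative vanishes, then $\phi_\theta(t)=f(x,y)(1+t)$ near $0^{+}$ and $f$ equals $f(x,y)(1-t^2)<f(x,y)$ along the segment for small $t>0$, so $(u\theta,v/\theta)-(x,y)$ is a direction of descent to a global minimum.

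Finally, I must choose such a $\theta$, which is the main obstacle. When $y=0$ (case \eqref{saddle_a}) a direct computation gives $f'\bigl((x,0);(u\theta-x,v/\theta)\bigr)=\frac{\|v\|_1}{|\theta|}\bigl(\sum_{u_i=0}|x_i|-\mathrm{sign}(\theta)\sum_{u_i\neq 0}\mathrm{sign}(u_i)x_i\bigr)$, which vanishes for $\theta$ of the sign of $\sum_{u_i\neq 0}\mathrm{sign}(u_i)x_i$ (any $\theta$ when that sum is $0$) because $\bigl|\sum_{u_i\neq 0}\mathrm{sign}(u_i)x_i\bigr|=\sum_{u_i=0}|x_i|$; the case $x=0$ is symmetric. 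When $x\neq 0$ and $y\neq 0$ the point satisfies \eqref{saddle_c}-\eqref{saddle_d}; with $p_i:=x_i/u_i$ and $q_j:=y_j/v_j$, the identity $\sum_{i,j}|u_iv_j|(q_j\theta+p_i/\theta-2p_iq_j)=0$, a consequence of $\sum_{u_i\neq 0}\mathrm{sign}(u_i)x_i=\sum_{v_j\neq 0}\mathrm{sign}(v_j)y_j=0$, lets one rewrite the directional derivative using only the pairs with $p_iq_j=1$, collapsing it to $2\sum_{p_iq_j=1,\ q_j\theta+p_i/\theta\geqslant 2}|u_iv_j|(q_j\theta+p_i/\theta-2)$; the constraints $p_iq_j\leqslant 1$ at all valid pairs then force all ``active'' ratios $p_i$ (those with $p_iq_j=1$ for some $j$) of a given sign to share a common value, and taking $\theta$ equal to that value makes every remaining summand vanish. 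I expect this last step to be the crux — the simplification of the directional derivative via the zero-sum conditions and the use of $p_iq_j\leqslant 1$ to annihilate it with a single $\theta$. The fact that this derivative is exactly zero rather than negative is precisely why the quadratic factor $1-t^2$ in the product identity is indispensable: a first-order argument alone would not detect the descent.
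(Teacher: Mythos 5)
Your proposal is correct and establishes everything in the proposition, and while parts 1) and 2) follow essentially the paper's route, your treatment of the saddle points is a genuinely different mechanism. For the spurious local minima you expand $f(x+\delta,\epsilon)$ with the signs of $(x_i+\delta_i)\epsilon_j-u_iv_j$ frozen and obtain the same lower bound the paper derives in \eqref{brutal_a}--\eqref{brutal_i}; your bound is right as displayed, though the parenthetical ``$x_i=0$ wherever $u_i=0$'' is actually false at a spurious local minimum (the strict inequality in \eqref{eq:spurious_a} forces some $x_i\neq 0$ with $u_i=0$) and, fortunately, is not used anywhere in the derivation. For the saddle points, the paper verifies directly that the ratio inequalities in \eqref{saddle_c} are preserved along the segment toward $(u\theta,v/\theta)$ and then computes $f$ term by term in \eqref{very_brutal_a}--\eqref{very_brutal_j} and \eqref{last_brutal_a}--\eqref{last_brutal_j}. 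You instead factor $x_i(t)y_j(t)-u_iv_j=(1-t)(a_{ij}+t\,b_{ij})$ to get the exact identity $f=(1-t)\phi_\theta(t)$ with $\phi_\theta$ convex and piecewise affine, which reduces the entire saddle analysis to exhibiting one $\theta$ for which the one-sided derivative $\phi_\theta'(0^+)-f(x,y)=f'\bigl((x,y);(u\theta-x,v/\theta-y)\bigr)$ vanishes; the decrease $(1-t^2)f(x,y)$ then falls out of the product structure. I checked the two computations this hinges on: the zero-sum conditions do give $\sum_{u_iv_j\neq 0}|u_iv_j|(\theta q_j+p_i/\theta-2p_iq_j)=0$, the derivative does collapse to $2\sum|u_iv_j|(\theta q_j+p_i/\theta-2)$ over the binding pairs with $\theta q_j+p_i/\theta\geqslant 2$, and the common value of the positive binding ratios (whose equality you prove exactly as the paper does) annihilates it. One small justification you owe: the ``without loss of generality'' that a positive binding ratio exists is covered because, if all binding ratios are negative, any $\theta>0$ makes $\theta q_j+p_i/\theta<0<2$ on every binding pair, so the collapsed sum is already empty. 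What your route buys is conceptual economy -- one identity plus one vanishing-derivative condition replaces two long segment computations, and it makes transparent why the descent is exactly quadratic (a critical point cannot have a direction of first-order decrease). What the paper's route buys is self-containedness: it manipulates absolute values directly and never needs the convexity of $t\mapsto\phi_\theta(t)$ or the interpretation of $\phi_\theta'(0^+)-f(x,y)$ as a directional derivative.
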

\begin{proof}
Lemma \ref{lemma:eliminate} implies that \eqref{eq:global}, \eqref{eq:spurious_a}-\eqref{eq:spurious_b}, and \eqref{saddle_a}-\eqref{saddle_d} form a partition of the set of critical points. It thus suffices to check in each case that the desired property about the local variation of $f$ holds true.
1) Observe that \eqref{eliminate_a} and \eqref{eq:global} are equivalent because $u\neq 0$ and $v\neq 0$. 2) Consider $x\in \mathbb{R}^m$ and $y=0$. For all $(h,k) \in \mathbb{R}^m \times \mathbb{R}^n$ small enough, we have $f(x+h,y+k) = \hdots $
\begin{subequations}
    \begin{align}
      \label{brutal_a} = & \sum\limits_{i=1}^m \sum\limits_{j=1}^n |(x_i+h_i)k_j - u_iv_j| \\
      \label{brutal_b} = & \sum\limits_{u_iv_j=0}  |(x_i+h_i)k_j| + \sum\limits_{u_iv_j\neq 0}  |(x_i+h_i)k_j - u_iv_j| \\
      \label{brutal_c} = & \sum\limits_{u_iv_j=0}  |(x_i+h_i)k_j| + \sum\limits_{u_iv_j\neq 0}  |u_iv_j| - \mathrm{sign}(u_iv_j)(x_i+h_i)k_j \\
    \label{brutal_d}  = & f(x,y) + \sum\limits_{u_iv_j=0}  |(x_i+h_i)k_j| - \sum\limits_{u_iv_j\neq 0}   \mathrm{sign}(u_iv_j)(x_i+h_i)k_j \\
     \label{brutal_e}  = &  f(x,y) + \sum\limits_{u_iv_j=0}  |(x_i+h_i)k_j| - \sum\limits_{v_j\neq 0}k_j\mathrm{sign}(v_j)\sum\limits_{u_i \neq 0} \mathrm{sign}(u_i)(x_i + h_i)\\
     \label{brutal_f}  \geqslant & f(x,y) + \sum\limits_{u_iv_j=0}  |(x_i+h_i)k_j| - \sum\limits_{v_j\neq 0}|k_j| \left|\sum\limits_{u_i \neq 0} \mathrm{sign}(u_i)(x_i + h_i)\right|\\
     & \hspace*{-2.5mm} (\text{equality holds if}~ \mathrm{sign}(k_j) = \mathrm{sign}\left(v_j \sum_{u_i \neq 0} \mathrm{sign}(u_i)(x_i + h_i)\right)~\text{when}~v_j \neq 0) \notag \\
     \label{brutal_g}  = & f(x,y) + \sum\limits_{v_j=0} |k_j|\sum\limits_{i = 1}^n |x_i+h_i| + \hdots \\
     \label{brutal_h}  & \sum\limits_{v_j\neq 0}|k_j| \left( \sum\limits_{u_i = 0}|x_i + h_i| -  \left|\sum\limits_{u_i \neq 0} \mathrm{sign}(u_i)(x_i + h_i)\right| \right)
       \\
     \label{brutal_i}  \geqslant & f(x,y) + \sum\limits_{v_j\neq 0}|k_j| \left( \sum\limits_{u_i = 0}|x_i + h_i| -  \left|\sum\limits_{u_i \neq 0} \mathrm{sign}(u_i)(x_i + h_i)\right| \right)\\
     & \hspace*{2cm} (\text{equality holds if}~k_j = 0~\text{when}~v_j = 0). \notag
    \end{align}
\end{subequations}
Above, \eqref{brutal_a} is a consequence of the definition of $f$ and $y=0$. \eqref{brutal_b} is obtained by splitting the sum according to whether the product $u_i v_j$ is equal to zero. \eqref{brutal_c} is valid since $k_j$ is small and $|a+b|=|a|-\mathrm{sign}(a)b$ if $|a|>|b|$. \eqref{brutal_d} is due to $f(x,0) = \sum_{u_iv_j\neq 0}  |u_iv_j|$ and $y=0$. \eqref{brutal_e} is obtained by writing that $\mathrm{sign}(u_iv_j)(x_i+h_i)k_j = \mathrm{sign}(u_i)(x_i+h_i)\mathrm{sign}(v_j)k_j$, then factorizing the sum. \eqref{brutal_f} is obtained by observing that the last sum in \eqref{brutal_e} is less than or equal to its absolute value. We then apply the triangular inequality and the fact that $|k_j\mathrm{sign}(v_j)| = |k_j|$. Equality in the inequality in \eqref{brutal_f} is then obtained by taking $k_j$ to be of the same sign as the term it multiplies in the summation over $v_j \neq 0$ in \eqref{brutal_e}. \eqref{brutal_g}-\eqref{brutal_h} are obtained by splitting the first sum in \eqref{brutal_f} according to whether the $v_j$ is equal to zero. Finally, inequality \eqref{brutal_i} holds since $\sum_{v_j=0} |k_j|\sum_{i = 1}^n |x_i+h_i| \geqslant 0$. This term is equal to zero if $k_j$ when $v_j = 0$, in which case we get equality in the inequality in \eqref{brutal_i}.

Assume that \eqref{eq:spurious_a} holds. For all $(h,k) \in \mathbb{R}^m \times \mathbb{R}^n$ small enough, \eqref{brutal_i} yields
\begin{subequations}
\begin{align}
    f(x+h,y+k) &\geqslant f(x,y) + \frac{1}{2}\sum\limits_{v_j\neq 0}|k_j|\left(\sum\limits_{u_i = 0 } |x_i| - \left| \sum\limits_{u_i \neq 0} \mathrm{sign}(u_i)x_i \right|\right) \\ 
    & \geqslant f(x,y)>0
    \end{align}
\end{subequations}
where the strict equality is due to $y=0$, $u\neq 0$ and $v\neq 0$. Thus $(x,y)$ is a spurious local minimum. The same argument applies to \eqref{eq:spurious_b}. 

3) Assume that \eqref{saddle_a} holds, namely $|\sum_{u_i\neq 0} \mathrm{sign}(u_i) x_i| = \sum_{u_i= 0} |x_i|$ and $y = 0$. If $| \sum_{u_i \neq 0} \mathrm{sign}(u_i)x_i | = 0$, then \eqref{saddle_c}-\eqref{saddle_d} hold, a case that we will treat later. If $| \sum_{u_i \neq 0} \mathrm{sign}(u_i)x_i | \neq 0$, then take any $\theta \neq 0$ such that $\mathrm{sign}(\theta) = \mathrm{sign}(\sum_{u_i \neq 0}$ $\mathrm{sign}(u_i)x_i)$ and consider the direction $(u \theta, v/\theta) -(x,y)$. It goes from $(x,y)$ towards the global minimum $(u \theta, v/\theta)$. As we next show, taking a small step $t>0$ in this direction renders the inequalities in \eqref{brutal_f} and in \eqref{brutal_i} binding, where $(h,k) := t(u \theta - x, v/\theta - y)$. Regarding \eqref{brutal_f}, observe that, if $v_j \neq 0$, then
\begin{subequations}
    \begin{align}
    	\label{subeq:32_a} \mathrm{sign}(k_j) & =  \mathrm{sign}(tv_j/\theta)\\
       \label{subeq:32_b}  & =  \mathrm{sign}\left(tv_j\sum\limits_{u_i \neq 0} \mathrm{sign}(u_i)x_i\right) \\
       \label{subeq:32_c}  & =  \mathrm{sign}\left((1-t)v_j\sum\limits_{u_i \neq 0} \mathrm{sign}(u_i)x_i\right) \\
        \label{subeq:32_d}  & =  \mathrm{sign} \Biggr( (1-t)v_j\underbrace{\sum\limits_{u_i \neq 0} \mathrm{sign}(u_i)x_i}_{\neq 0} + tv_j\sum\limits_{u_i \neq 0} \mathrm{sign}(u_i)u_i \theta \Biggr) \\
       \label{subeq:32_e}   & =  \mathrm{sign}\left(v_j\sum\limits_{u_i \neq 0} \mathrm{sign}(u_i)[(1-t)x_i + tu_i \theta]\right) \\
        \label{subeq:32_f}  & = \mathrm{sign}\left(v_j\sum\limits_{u_i \neq 0} \mathrm{sign}(u_i)(x_i + h_i)\right).
    \end{align}
\end{subequations}
Above, \eqref{subeq:32_a} is due to $k_j = v_j/\theta - y_j$, $y = 0$, and $t>0$. \eqref{subeq:32_b} follows from $\mathrm{sign}(\theta) = \mathrm{sign}(\sum_{u_i \neq 0} \mathrm{sign}(u_i)x_i)$. Also, since $t>0$ is small, multiplying a number by $t$ or $(1-t)$ doesn't change its sign, hence \eqref{subeq:32_c}. It is legitimate to add a linear term in function of $t$ in \eqref{subeq:32_d} since it is dominated by the first term ($t$ is small). We get \eqref{subeq:32_e} by factorizing the two terms by $v_j$. Finally, \eqref{subeq:32_f} is due to the fact that, by definition, $h = t(u \theta - x)$, and hence $t u_i \theta =h_i + tx_i$. Regarding \eqref{brutal_i}, observe that, if $v_j = 0$, then $k_j=tv_j/\theta = 0$.

In order to show that $(u \theta - x, v/\theta - y)$ is a direction of descent, we pick up the computation in \eqref{brutal_a}-\eqref{brutal_i} where we left off: $f(x+h,y+k) = \hdots$
\begin{subequations}
    \begin{align}
   \label{very_brutal_a} = & f(x,y) + \sum\limits_{v_j\neq 0}|tv_j/\theta| \left( \sum\limits_{u_i = 0}|x_i + t (u_i \theta - x_i)| +  \right. \\
  \label{very_brutal_b}   & \left. -  \left|\sum\limits_{u_i \neq 0} \mathrm{sign}(u_i)[x_i + t(u_i \theta - x_i)]\right| \right)\\
   \label{very_brutal_c}  = & f(x,y) + t/|\theta|\sum\limits_{v_j\neq 0}|v_j| \left( \sum\limits_{u_i = 0}|(1-t)x_i| +  \right. \\
   \label{very_brutal_d}  & \left. -  \left|(1-t)\sum\limits_{u_i \neq 0} \mathrm{sign}(u_i)x_i+t\theta\sum\limits_{u_i \neq 0} \mathrm{sign}(u_i)u_i\right| \right)\\
     \label{very_brutal_e}   = &f(x,y) + t/|\theta|\sum\limits_{j=1}^n |v_j| \left[ (1 - t)\sum\limits_{u_i = 0}|x_i| -  (1-t)\left|\sum\limits_{u_i \neq 0} \mathrm{sign}(u_i)x_i\right| \right. +\\
    \label{very_brutal_f}    & 
      \left. -\mathrm{sign}\left((1-t)\sum\limits_{u_i \neq 0} \mathrm{sign}(u_i)x_i\right)t\theta \sum\limits_{u_i \neq 0} |u_i| \right]\\
      \label{very_brutal_g}  = &f(x,y) + t/|\theta|\sum\limits_{j=1}^n|v_j| \Biggr[ (1 - t) \underbrace{\left( \sum\limits_{u_i = 0}|x_i| - \left|\sum\limits_{u_i \neq 0}\mathrm{sign}(u_i)x_i\right| \right)}_{=~0} + \\[-2mm]
      \label{very_brutal_h} & - t|\theta| \sum\limits_{i=1}^m|u_i| \Biggr]\\[1mm]
    \label{very_brutal_i}    = &f(x,y) - t^2\sum\limits_{j=1}^n|v_j| \sum\limits_{i=1}^m|u_i|\\[1mm]
     \label{very_brutal_j}   = & (1-t^2)f(x,y).
    \end{align}
\end{subequations}
Above, \eqref{very_brutal_a}-\eqref{very_brutal_b} is due to the definition of $(h,k)$. We get \eqref{very_brutal_c}-\eqref{very_brutal_d} by factorizing by $t/|\theta|$, canceling out $u_i$ in the summation over $u_i=0$, and expanding the product in the summation over $u_i\neq 0$. \eqref{very_brutal_e}-\eqref{very_brutal_f} use the fact that $t>0$ is small and $\sum_{u_i \neq 0}\mathrm{sign}(u_i)x_i \neq 0$. They also use the fact that $|a+b| = |a|+\mathrm{sign}(a)b$ if $|a|>|b|$. \eqref{very_brutal_g}-\eqref{very_brutal_h} are due to $\mathrm{sign}(\theta) = \mathrm{sign}(\sum_{u_i \neq 0} \mathrm{sign}(u_i)x_i)$ and $\mathrm{sign}(\theta)\theta = |\theta|$. It also uses the fact the multiplying a number by $1-t>0$ doesn't change its sign. The term that cancels out in \eqref{very_brutal_g} is due to \eqref{saddle_a}. Finally, \eqref{very_brutal_j} uses the definition of $f$ and the fact that $y=0$. It implies that $(h,k)$ is a direction of descent. We conclude that any point $(x,y)$ satisfying \eqref{saddle_a} is a saddle point and that it admits a direction of descent towards the global minimum $(u \theta, v/\theta)$. The same argument applies to \eqref{saddle_b}, namely $x = 0$ and $|\sum_{v_j\neq 0} \mathrm{sign}(v_j) y_j| = \sum_{v_j = 0} |y_j|$. 

We now treat the remaining case \eqref{saddle_c}-\eqref{saddle_d}, namely $\sum_{u_i\neq 0} \mathrm{sign}(u_i) x_i = \sum_{v_j\neq 0} \mathrm{sign}(v_j) y_j = 0$, $x_iy_j/(u_iv_j) \leqslant 1$ if $u_iv_j\neq 0$, $x_i = 0$ if $u_i=0$, and $y_j= 0$ if $v_j=0$. Given $\theta \neq 0$, consider the direction $(u \theta, v/\theta) -(x,y)$, which goes from $(x,y)$ towards the global minimum $(u \theta, v/\theta)$. As we explain below, one can choose $\theta$ such that, when taking a small step $t>0$ in this direction, the ratio inequalities in \eqref{saddle_c} remain valid. In other words, we have
\begin{equation}
\label{eq:ration_preserved_1}
    \frac{(x_i+h_i)(y_j+k_j)}{u_iv_j} \leqslant 1~,~~~\text{if}~~~u_iv_j \neq 0.
\end{equation}
where $(h,k) := t(u \theta -x, v/\theta - y)$ and $t>0$ is small enough. In order to prove this, observe that \eqref{eq:ration_preserved_1} is equivalent to 
\begin{equation}
\label{eq:ration_preserved_2}
    \left((1-t)\frac{x_i}{u_i}+t\theta\right)\left((1-t)\frac{y_j}{v_j}+t/\theta\right) \leqslant 1~,~~~\text{if}~~~u_iv_j \neq 0,
\end{equation}
where we simply use the definition of $(h,k)$. For all indices $i$ and $j$ where $x_iy_j/(u_iv_j) < 1$, the inequality in \eqref{eq:ration_preserved_2} holds by continuity for $t$ small enough, regardless of $\theta$. Hence, if there are no binding inequalities $x_iy_j/(u_iv_j) = 1$, then one may choose any $\theta \neq 0$. Otherwise, the set of binding inequalities $(x_i/u_i)(y_j/v_j) = 1$ can be decomposed into two groups: those with positive ratios (i.e. $x_i/u_i>0$ and $y_j/v_j>0$) and those with negative ratios (i.e. $x_i/u_i$ and $y_j/v_j<0$). Without loss of generality, we may assume that there exists at least one positive binding ratio. As it turns out, all the positive ratios $x_i/u_i$ involved in binding inequalities are equal to one another. Indeed, if $(x_i/u_i)(y_j/v_j) = 1$ and $(x_k/u_k)(y_l/v_l) = 1$, then the inequalities $(x_i/u_i)(y_l/v_l) \leqslant 1$ and $(x_k/u_k)(y_j/v_j) \leqslant 1$ yield $x_i/u_i \leqslant x_k/u_k$ and $x_k/u_k \leqslant x_i/u_i$, that is to say, $x_k/u_k = x_i/u_i$. Let $\theta>0$ denote this common ratio.
Now consider a binding inequality in \eqref{eq:ration_preserved_2}: if $x_i/u_i>0$, then $x_i/u_i = \theta$ and $y_j/v_j = 1/\theta$, so that the inequality in \eqref{eq:ration_preserved_2} readily holds. If $x_i/u_i<0$, then $x_i/u_i<(1-t)x_i/u_i+t\theta<0$ and $y_j/v_j<(1-t)y_j/v_j+t\theta<0$ for $t>0$ small enough, hence $[(1-t)x_i/u_i+t\theta][(1-t)y_j/v_j+t\theta] \leqslant x_iy_j/(u_iv_j) \leqslant 1$. Again, the inequality in \eqref{eq:ration_preserved_2} holds. As a result, we have found $\theta$ for which \eqref{eq:ration_preserved_2} is true with $t>0$ small enough. We next make use of this to compute $f(x+h,y+k)=\hdots$
\begin{subequations}
\label{eq:objective}
	\begin{align}
	\label{last_brutal_a} = & \sum\limits_{i = 1}^m\sum\limits_{j = 1}^n |(x_i+h_i) (y_j+k_j) - u_i v_j|\\
	\label{last_brutal_b} = & \sum\limits_{u_i \neq 0}\sum\limits_{v_j \neq 0} |(x_i+h_i) (y_j+k_j) - u_i v_j|\\
	\label{last_brutal_c} = & \sum\limits_{u_i \neq 0}\sum\limits_{v_j \neq 0} |u_i v_j|\left|\frac{(x_i+h_i) (y_j+k_j)}{u_iv_j} - 1\right|\\
	\label{last_brutal_d} = & \sum\limits_{u_i \neq 0}\sum\limits_{v_j \neq 0} |u_i v_j|\left(1 - \frac{(x_i+h_i) (y_j+k_j)}{u_iv_j}\right)\\
	\label{last_brutal_e} = & \sum\limits_{u_i \neq 0}\sum\limits_{v_j \neq 0} |u_i v_j| - \sum\limits_{u_i \neq 0}\sum\limits_{v_j \neq 0} \mathrm{sign}(u_iv_j)(x_i+h_i) (y_j+k_j)\\
	\label{last_brutal_f} = & \sum\limits_{u_iv_j \neq 0}\sum\limits_{v_j \neq 0} |u_i v_j| - \sum\limits_{u_i \neq 0} \mathrm{sign}(u_i)(x_i+h_i)\sum\limits_{v_j \neq 0}  \mathrm{sign}(v_j)(y_j+k_j) \\
	\label{last_brutal_g} = & \sum\limits_{u_i \neq 0}\sum\limits_{v_j \neq 0} |u_i v_j| - \left((1-t)\sum\limits_{u_i \neq 0} \mathrm{sign}(u_i)x_i+t\sum\limits_{u_i \neq 0}\mathrm{sign}(u_i)u_i \theta \right) \times \\ \label{last_brutal_h} & 
	\left((1-t)\sum\limits_{v_j \neq 0}  \mathrm{sign}(v_j)y_j+t\sum\limits_{v_j \neq 0}\mathrm{sign}(v_j)v_j/\theta \right) \\
	\label{last_brutal_i} = & \sum\limits_{u_i \neq 0}\sum\limits_{v_j \neq 0} |u_i v_j| - t^2 \sum\limits_{u_i \neq 0} \mathrm{sign}(u_i)u_i  \sum\limits_{v_j \neq 0} \mathrm{sign}(v_j)v_j \\
	\label{last_brutal_j} = & (1-t^2)f(x,y).
	\end{align}
\end{subequations}
Above, \eqref{last_brutal_a} is a consequence of the definition of $f$. By definition of $h$, we have $x_i+h_i = (1-t)x_i + tu_i$. According to \eqref{saddle_d}, $x_i=0$ whenever $u_i=0$, hence we also have that $x_i+h_i=0$ whenever $u_i=0$. Likewise $y_j+k_j=0$ whenever $v_j = 0$, and thus \eqref{last_brutal_b} holds. \eqref{last_brutal_c} is obtained by factorizing each term in the sum by $|u_i v_j|$. \eqref{eq:ration_preserved_1} implies that the term inside the absolute value is non-positive, hence \eqref{last_brutal_d}. \eqref{last_brutal_e} is the result of expanding the product inside the sum and the fact that $\mathrm{sign}(a) = |a|/a$ when $a \neq 0$. \eqref{last_brutal_f} is obtained by factorizing the second term in \eqref{last_brutal_e}. \eqref{last_brutal_g}-\eqref{last_brutal_h} uses the definition of $(h,k)$, namely that $h_i = tu_i\theta - tx_i$ and $k_j = tv_j/\theta - y_j$. \eqref{last_brutal_i} follows from the equalities in \eqref{saddle_c}, which results in two terms cancelling out. \eqref{last_brutal_j} is due to the fact that when $t=0$, the resulting expression must be equal to $f(x,y)$ since we are computing $f(x+t(u\theta-x),y+t(v/\theta-y))$. We conclude that $(x,y)$ is a saddle point and that it admits a direction of descent towards the global minimum $(u \theta, v/\theta)$.
\end{proof}

We can now deduce Theorem \ref{thm:landscape} from Proposition \ref{prop:classify}. Recall that we use the convention that a sum over an index set which is empty is equal to zero. For example, if none of the entries of $u$ are zero, then $\sum_{u_i=0} |x_i| = 0$. In this case, \eqref{eq:spurious_a} is not feasible since an absolute value cannot be negative. It follows from \eqref{eq:spurious_a}-\eqref{eq:spurious_b} that there are no spurious local minima if neither of the entries of $u$ nor $v$ are equal to zero. This condition on $u$ and $v$ is equivalent to saying the $uv^T$ has no zero entries. Conversely, if some entries of either $u$ or $v$ are equal to zero, then one can readily see that \eqref{eq:spurious_a}-\eqref{eq:spurious_b} is feasible. In the case where all the entries of $uv^T$ are equal to zero, it follows from the expression of $f(x,y)= \|x\|_1\|y\|_1$ that every local minimum is a global minimum. Theorem \ref{thm:landscape} naturally ensues.

\begin{figure}[ht]
    \centering
  \includegraphics[width=.9\linewidth]{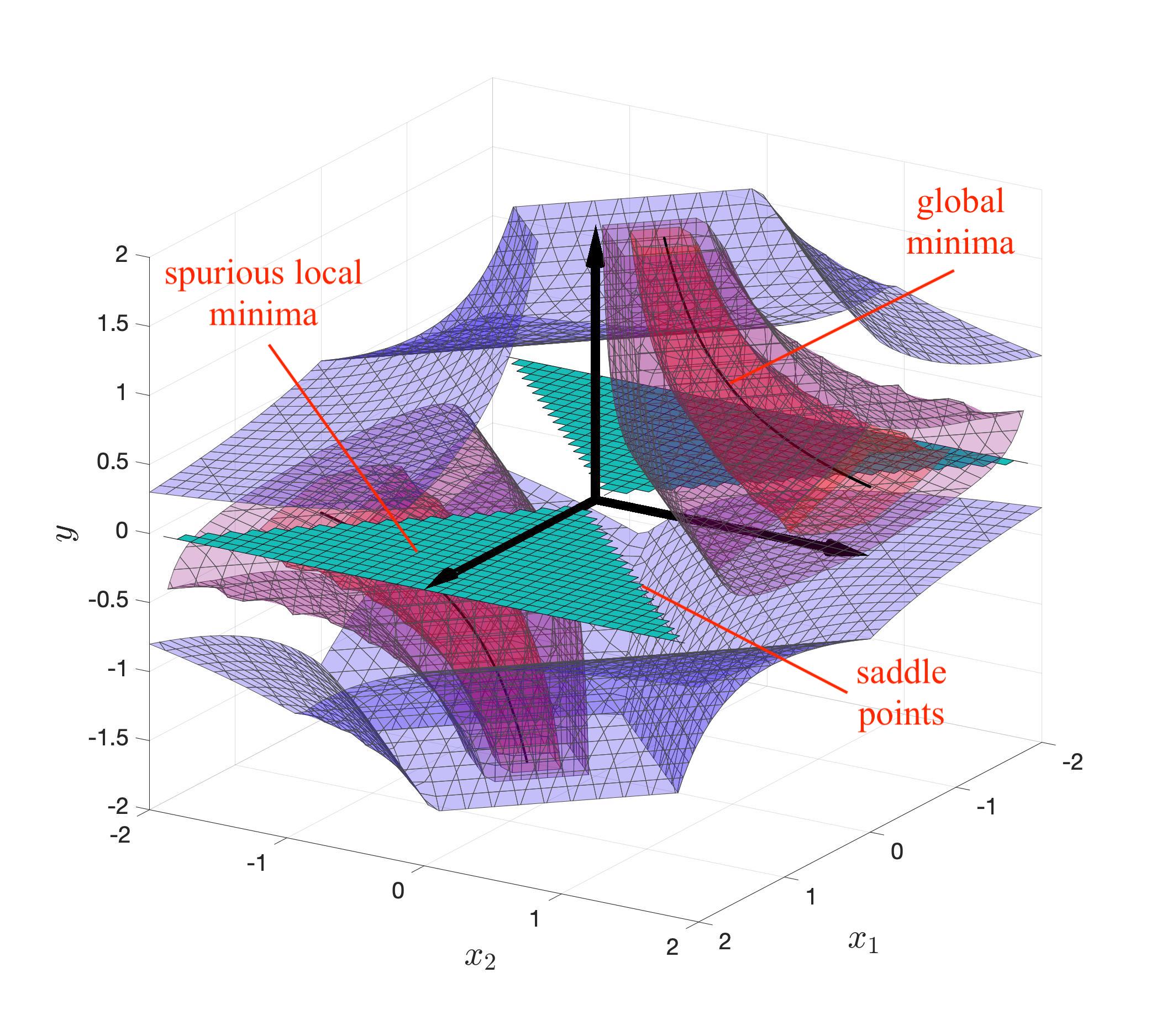}
  \caption{Landscape of $f(x_1,x_2,y):=|x_1y|+|x_2y-1|$.}
  \label{fig:contour}
\end{figure}

We finish this section with an example. When $u=(0,1)^T$ and $v = 1$, the landscape is represented in Figure \ref{fig:contour}.
According to \eqref{eq:global}, the global minima are all $(x_1,x_2,y) \in \mathbb{R}^3$ such that $(x_1,x_2,y) = (0,\theta,1/\theta)$ where $\theta \neq 0$. This corresponds to the two black hyperbolic branches. The surfaces around the global minima denote level sets of $f$, where the warmer the color (from blue to red), the smaller the objective value. Among the three level sets in the figure, the one with highest objective value (in blue) has a part which is not represented, namely, its intersection with the positive orthant. This is done in order to improve visibility. According to \eqref{eq:spurious_a}-\eqref{eq:spurious_b}, the spurious local minima are all $(x_1,x_2,y)\in \mathbb{R}^3$ such that $|x_2|<|x_1|$ and $y=0$. This corresponds to the area inside the two green triangles. According to \eqref{saddle_a}-\eqref{saddle_d}, the saddle points are all $(x_1,x_2,y)\in \mathbb{R}^3$ such that $|x_2|=|x_1|$ and $y=0$. This corresponds to the edges of the two green triangles located on the diagonal and anti-diagonal of the $(x_1,x_2)$-plane. Notice that the union of the global minima, the spurious local minima, and the saddle points in Figure \ref{fig:contour} agrees with the expression of the critical points found with a commercial solver in Figure \ref{fig:tarski}.

\end{document}